\title{Sharp and improved regularity for a class of doubly degenerate parabolic PDEs}
\author{Jo\~{a}o Vitor da Silva, \quad Elzon C. J\'{u}nior \quad and \quad Gleydson C. Ricarte}
\def \R {\mathbb{R}}
\def \Div {\mathrm{div}}
\def \loc {\mathrm{loc}}
\newcommand{\defeq}{\mathrel{\mathop:}=}
\newtheorem{theorem}{Theorem}[section]
\newtheorem{lemma}[theorem]{Lemma}
\newtheorem{proposition}[theorem]{Proposition}
\newtheorem{corollary}[theorem]{Corollary}
\theoremstyle{definition}
\newtheorem{definition}[theorem]{Definition}
\newtheorem{example}[theorem]{Example}
\theoremstyle{remark}
\newtheorem{remark}[theorem]{Remark}
\numberwithin{equation}{section}
\newcommand{\intav}[1]{\mathchoice {\mathop{\vrule width 6pt height 3 pt depth  -2.5pt
\kern -8pt \intop}\nolimits_{\kern -6pt#1}} {\mathop{\vrule width
5pt height 3  pt depth -2.6pt \kern -6pt \intop}\nolimits_{#1}}
{\mathop{\vrule width 5pt height 3 pt depth -2.6pt \kern -6pt
\intop}\nolimits_{#1}} {\mathop{\vrule width 5pt height 3 pt depth
-2.6pt \kern -6pt \intop}\nolimits_{#1}}}
\begin{document}

\begin{abstract}
In this manuscript we establish local $C^{\alpha, \frac{\alpha}{\theta}}$ regularity estimates for bounded solutions of a certain class of doubly degenerate evolution PDEs, whose simplest model case is given by
$$
    \frac{\partial u}{\partial t}-\Div(m\lvert u\rvert^{m-1}\lvert \nabla u \rvert^{p-2}\nabla u) = f(x, t) \quad \text{in} \quad \Omega_T \defeq \Omega \times (0, T),
$$
for $m\ge 1$, $p\ge 2$ and $f$ belonging to a suitable anisotropic Lebesgue space. By making use of intrinsic scaling techniques and geometric tangential methods, we derive sharp regularity estimates for such models, which depend only on universal and compatibility parameters of the problem. In such a scenario, our results are natural improvements for former ones in the context of nonlinear evolution PDEs with degenerate structure via a unified approach. As a consequence for our findings and approach, we address a Liouville type result for entire solutions of a related homogeneous problem with frozen coefficients and asymptotic estimates under a certain approximating regime, which may have their own mathematical interest. We also deliver explicit examples of degenerate PDEs where our results take place.

\medskip

\noindent \textbf{Keywords:} Sharp and improved H\"{o}lder regularity, Doubly degenerate parabolic PDEs, intrinsic scaling techniques, Geometric tangential analysis.

\medskip

\noindent \textbf{AMS Subject Classifications:} 35B65, 35K55, 35K65.
\end{abstract}

\maketitle



\section{Introduction}\label{s1}

Throughout this work we will address sharp and improved (geometric) $C_{\text{loc}}^{\alpha, \frac{\alpha}{\theta}}$ regularity estimates for bounded weak solutions to doubly degenerate parabolic equations, whose prototype is given by models in divergence form (with structure, to be clarified \textit{a posteriori})
\begin{equation}\label{1.1}
  \mathcal{Q}_{\mathcal{A}} u \defeq \frac{\partial u}{\partial t}-\Div(\mathcal{A}(x, t,u, \nabla u)) = f(x, t) \quad \text{in} \quad \Omega_T,
\end{equation}
where the source term $f$ belongs to suitable Lebesgue spaces with mixed norms (cf. \cite{BP}) and $\Omega_T \defeq \Omega\times(0,T)$ with $\Omega\subset\R^n$ an open, bounded set and $T>0$.

The prototype which we have in mind in \eqref{1.1} is the following doubly nonlinear degenerate model, namely the inhomogeneous porous-medium $p$-Laplacian (or the so-named $(m,p)-$Laplacian) equation given by
\begin{equation}\label{1.2}
  \mathcal{Q}_{m, p} u \defeq  \frac{\partial u}{\partial t}-\Div(m\lvert u\rvert^{m-1}\lvert \nabla u \rvert^{p-2}\nabla u) = f(x, t) \quad \text{in} \quad \Omega_T,
\end{equation}
and
$$
  \hat{\mathcal{Q}}_{m, p} u \defeq  \frac{\partial u}{\partial t}- \sum_{i=1}^{n}  \frac{d}{dx_i} \left\{ m|u|^{m-1}\left|\frac{\partial u}{\partial x_i}\right|^{p-2}\frac{\partial u}{\partial x_i} \right\} = f(x, t) \quad \text{in} \quad \Omega_T,
$$
In this framework, our investigations have a physical-mathematical's appeal, for instance,
in the analysis of the filtration of a polytropic non-Newtonian fluid in a porous medium. Such equations also arise in the study of turbulent filtration of a gas or a fluid through porous media, in theoretical glaciology, plasma physics, image-analysis and ground water problems (see \cite{AMS04}, \cite{Iva97}, \cite{Iva00} and therein references). In addition, it should be pointed out that for $m\geq 1$, $p\geq 2$, our model case extends the well-known porous medium equation (case $p=2$ -- cf. \cite{AMU20} and \cite{Diehl21}), as well as the evolution $p$-Laplacian equation (case $m=1$ -- cf. \cite{TU14}). Finally, for $m=1$ and $p=2$ our researches recover the classical estimates to heat operator without making use of energy considerations (cf. \cite{daST17} and \cite{K08}). In summary, our approach is stable as $m \to 1^{+}$ and $p \to 2^{+}$, which allow us to include the endpoint cases $m=1$ and/or $p=2$.

Additionally, concerning \eqref{1.2}, such a model case possesses a degeneracy law driven by a double nonlinearity, whose ``modulus of ellipticity'', i.e. $|u|^{m-1}|\nabla u|^{p-2}$, collapses along vanishing and singular points of existing solutions, namely
$$
\mathcal{Z}(u) \defeq \{u=0\} \quad \text{and} \quad \mathcal{S}(u) \defeq \{|\nabla u|=0\}.
$$
 Furthermore, the presence of such degeneracy law suggests the use of intrinsic scaling and geometric tangential techniques adapted to our context. For this reason, one needs to consider several new aspects in the original argument presented, for instance, in \cite{AdaSRT}, \cite{JVSilva19} and \cite{TU14} in the scenario of evolutionary $p-$Laplacian type equations and \cite{Ara20} for the corresponding doubly degenerate model.

In our researches, we are looking for quantitative features for weak solutions which depends only on structural and universal parameters of the problem\footnote{Throughout this manuscript universal and structural read that the corresponding constants depend only on degeneracy parameters $m$ and $p$, dimension $n$, \text{a priori} estimates of the homogeneous problem with frozen coefficients, and bounds for configurational properties of $\mathcal{A}$, i.e. $\mathrm{C}_1$, $\mathrm{C}_2$, $\gamma_{1}\leq \gamma_{2}$, $\psi_{1}\leq \psi_{2}$ and $\sigma_0$. Please, see Subsection \ref{Assump} for more details}. Furthermore, it is worth highlighting the importance of this type of information, which play a decisive role in several mathematical contexts: blow-up analysis, related analysis in geometric and free boundary problems, analysis of asymptotic behavior of certain solutions, and for establishing Liouville type results, just to mention a few examples (cf. \cite{DOS18} and \cite{TU142}).

Particularly, we have interest in sharp and improved H\"{o}lder regularity estimates to locally bounded solutions of models like \eqref{1.1}. Such issues were our main impulse in studying non-linear evolution PDEs with doubly degenerate structure via a modern and systematic approach based on geometric regularity methods and intrinsic scaling techniques. As a matter of fact, the insights that were of paramount importance in driving our results have been inspired in techniques from regularity theory of nonlinear equations and free boundary problems (cf. \cite{Tei16} and \cite{TU18}, \cite{U08} for instrumental surveys concerning classical and modern results on geometric regularity methods and intrinsic scaling techniques in the elliptic and parabolic sceneries).

Finally, it is noteworthy that our contributions extend (regard to $C^{0,\alpha}$ scenario), as well as improve, to some extent, former seminal results (sharp regularity estimates) from Ara\'{u}jo \textit{et al} \cite[Theorem 6]{AMU20}, Ara\'{u}jo \cite[Theorem 1.1]{Ara20}, Diehl \cite[Theorem 2.5]{Diehl21} and Teixeira-Urbano \cite[Theorem 3.4]{TU14} by making use of intrinsic scaling techniques and a geometric tangential approach adjusted to our doubly nonlinear setting in a unified fashion.

\subsection{Background assumptions and statement of the main result}\label{Assump}

In this part we present the structural properties under $\mathcal{A}: \Omega_T \times \R \times \R^{n} \to \R^n$ and $f: \Omega_T \to \mathbb{R}$, which we will work with (for $m\ge 1$ and $p\geq 2$):
\smallskip
\begin{itemize}
	\item[(P1)][{\bf Degenerate ellipticity}]. There exists a positive constant $\mathrm{C}_1$ such that for a.e. $(x,t) \in \Omega_{T}$ there holds
	$$
		\langle \mathcal{A}(x, t,u, \nabla u), \nabla u \rangle \geq \mathrm{C}_{1}\Phi(|u|)|\nabla u|^p.
	$$
\smallskip
	\item[(P2)][{\bf Growth condition}]. There exists a positive constant $\mathrm{C}_{1}$ such that for a.e. $(x, t)\in\Omega_T$ there holds
	$$
		|\mathcal{A}(x, t, u, \nabla u)| \leq \mathrm{C}_{2}\Phi(|u|)|\nabla u|^{p-1}.
	$$
where $\Phi:\R^{+}\to \R^{+}$ is continuous function, which satisfies
$$
\gamma_{1}s^{m-1}\leq \Phi(s)\leq \gamma_{2}s^{m-1}, \forall s \in [0,\sigma_{0}]
$$
and
$$
\psi_{1}\leq \Phi(s)\leq \psi_{2} \quad, \forall s \in (\sigma_{0}, \infty)
$$
for positive constants $\gamma_{1}\leq \gamma_{2}$, $\psi_{1}\leq \psi_{2}$ and $\sigma_0$.
\smallskip
    \item[(P3)][{\bf{ Oscillation of the coefficients}}] There exist a modulus of continuity $\omega_{\mathcal{A}}: [0, \infty) \to [0, \infty)$ and a universal constant $\mathrm{C}_{\mathcal{A}}>0$ such that
    $$
\begin{array}{rcl}
  \Theta_{\mathcal{A}}(x, t, x_0, t_0) & \defeq & \displaystyle \sup\limits_{\xi \ne 0,s\ne 0}\frac{|\mathcal{A}(x,t,s,\xi)-\mathcal{A}(x_{0},t_{0},s,\xi)|}{|s|^{m-1}|\xi|^{p}} \\
   & \le & \mathrm{C}_{\mathcal{A}}\cdot \omega_{\mathcal{A}}(|(x, t)-(x_0, t_0)|)
\end{array}
    $$
\smallskip
\item[(P4)][{\bf{ Integrability of the source term}}] The source term $f \in L^{q, r}(\Omega_T) = L^r(0, T; L^q(\Omega))$, i.e., a Lebesgue space with mixed norms, (cf. \cite{BP}), which is a Banach space endowed with the norm:
$$
  \|f\|_{L^{q, r}(\Omega_T)} \defeq \left(\int_{0}^{T} \left(\int_{\Omega}|f(x, t)|^qdx\right)^{\frac{r}{q}}dt\right)^{\frac{1}{r}}.
$$
\end{itemize}
Furthermore, we will assume the following weaker compatibility conditions:
\begin{equation}\label{w-cc}{\tag{W-CC}}
\frac{1}{r}+\frac{n}{pq}<1 < \frac{2}{r}+\frac{n}{q},
\end{equation}
which implies (due to range of the parameters) the stronger compatibility conditions
\begin{equation}\label{cc}{\tag{S-CC}}
\frac{1}{r}+\frac{n}{pq}<1 \quad \text{and} \quad  \frac{3}{r}+m\left(1-\frac{1}{r}\right)+\frac{n}{q}>2 \quad (\text{for} \quad q, r>1).
\end{equation}

We want to stress that such assumptions (i.e. \eqref{w-cc} -- resp. \eqref{cc}) provide the minimal integrability condition, guaranteing the existence of bounded weak solutions of \eqref{1.1}, and it gives access to Caccioppoli estimates (see e.g. \cite[Ch.2, \S 1]{D93} and \cite[Proposition 3.1]{FS08}), as well as it defines the fashion in which weak solutions enjoy a genuine H\"{o}lder continuous modulus of continuity.

Let us now comment on the literature related to doubly degenerate PDEs: It is worth pointing out that, under structural assumptions (P1)-(P3) and taking into account the compatibility conditions \eqref{w-cc}, existence and uniqueness of solutions in suitable Sobolev spaces were established in \cite{IvaJag00}, \cite{Stur17} and \cite{Stur17-2}. Regularity and local behaviour of local weak solutions of doubly degenerate evolution models \eqref{1.1} received an increasing focus in the last decades (cf. \cite{BS99}, \cite{FSV14}, \cite{Iva89}, \cite{Iva91}, \cite{Iva95}, \cite{Iva97}, \cite{Iva98} and \cite{Iva00}, \cite{PV93} and \cite{Surn14}) due to their intrinsic connection to many problems arising nonlinear potential theory,  non-Newtonian fluids, mathematical physics, etc (cf. \cite{AMS04}, \cite{BDM13}, \cite{DiBUV02} and \cite{U08} for complete essays on regularity of evolution equations with degenerate diffusion, and \cite{KM11} and \cite{Stur18} for pioneering works by making use of parabolic potential estimates).

In spite of the fact that weak solutions of \eqref{1.1} under the compatibility assumption \eqref{w-cc} are known to be locally of the class $C^{0,\alpha}$ (in the parabolic sense) for some $\alpha \in (0, 1)$, the sharp exponent is known only for some specific sceneries (see, \cite{K08} for the linear case, \cite{AC86} and \cite{GS16} for the porous medium equation, \cite{TU14} for the inhomogeneous evolutionary $p-$Laplacian and \cite[Sec. 8]{GS16}, \cite[p. 2012]{Iva95} and \cite[Remark 3.4]{Iva00} for the doubly degenerate homogeneous model with frozen coefficients). Finally, this type of quantitative information is essential in several contexts from Mathematical Analysis, Geometry and Free Boundary Problems (cf. \cite{DOS18}, \cite{daSRS18}, \cite{daSS18} and \cite{TU142} and for some enlightening examples). In conclusion, this provided one of the main impetus for our decision in investigating \eqref{1.1}.

Recently, Ara\'{u}jo in \cite[Theorem 1.1]{Ara20} proved that, for $\Omega \subset \R^n$ an open and bounded domain, $\Omega_T \defeq \Omega \times (0, T)$, and $m >1$ and $p>2$, weak solutions of
$$
    \frac{\partial u}{\partial t}-\Div(m\lvert u\rvert^{m-1}\lvert \nabla u \rvert^{p-2}\nabla u) = f(x, t) \quad \text{in} \quad \Omega_T,
$$
belong to $C^{\alpha, \frac{\alpha}{\theta}}_{\text{loc}}(\Omega_T)$, where
\begin{equation}\label{AraujoExponent}
  \alpha \defeq \min\Big\{\frac{\alpha^{-}_{\mathrm{Hom}}(p-1)}{m+p-2},\frac{(pq-n)r-pq}{q[(r-1)(m+p-2)+1]}\Big\}
\end{equation}
and
$$
\theta \defeq p -\alpha(m+p-2).\Big(1-\frac{1}{m+p-2}\Big)
$$
under the compatibility assumptions
$$
\frac{1}{r}+\frac{n}{pq}<1 \quad \text{and} \quad \frac{3}{r}+\frac{n}{q}>2,
$$
which, the second one, is different from ours in \eqref{w-cc} and \eqref{cc}.

In this manuscript, we will derive the sharp and improved exponent (in some scenarios and under sharp integrability conditions \eqref{w-cc} and respectively \eqref{cc}) for the H\"{o}lder regularity of weak solutions of \eqref{1.1} under the general assumptions (P1)-(P4). Precisely, we find the sharp and improved exponent
{\scriptsize{
\begin{equation}\label{Sharp_alpha}{\tag{Sharp}}
    \alpha \defeq \min\left\{\max\left\{\frac{\alpha^{-}_{\mathrm{Hom}}p}{p_m+\alpha_{\mathrm{Hom}}(m+p-3)}, \frac{2\alpha^{-}_{\mathrm{Hom}}(p-1)}{p_m(m+p-2)}\right\}, \frac{(pq-n)r-pq}{q[(r-1)(m+p-2)+1]}\right\},
\end{equation}}}
where $\alpha_{\mathrm{Hom}} \in (0, 1]$ is the optimal H\"{o}lder exponent to homogeneous problem with frozen coefficients, $\iota^{-}$ means that we can select any value such that $s \in (0, \iota)$, and
\begin{equation}\label{p_m}{\tag{$p_m$}}
p_m \defeq
\left\{
\begin{array}{rcl}
  2 & \text{if} & m=1 \\
  p & \text{if} & m>1.
\end{array}
\right.
\end{equation}

Now, before enunciating our main result, we need to define the parabolic cylinders where our estimates will take place:
$$
  Q^{-}_{\rho}(x_0, t_0)  = B_{\rho}(x_0) \times \left(t_0-\rho^{\theta}, t_0\right],
$$
where $\theta>0$ is \textit{intrinsic scaling factor} given by
\begin{equation}\label{Int-Scal}
    \theta \defeq  p-\alpha(m+p-3) = p -\alpha(m+p-2).\Big(1-\frac{1}{m+p-2}\Big),
\end{equation}
and $\alpha>0$ is the optimal H\"{o}lder exponent given by \eqref{Sharp_alpha}. Moreover, notice that
$$
1+\frac{p-1}{p+m-2}\le \theta \le p \quad \text{for any} \quad p \ge 2 \quad \text{and} \quad m \ge 1.
$$

Taking into account the previous definitions, the following result holds:

\begin{theorem}\label{t1.2}
Let $u$ be a bounded weak solution of \eqref{1.1} in $Q^{-}_1$ and suppose that (P1)-(P4) are in force. Then, $u \in C_{\text{loc}}^{\alpha, \frac{\alpha}{\theta}}(Q^{-}_1)$ (in the parabolic sense), i.e., there exists a constant $\mathrm{M}_0(\verb"universal")>0$ such that
$$
  \displaystyle  [u]_{C^{\alpha, \frac{\alpha}{\theta}}\left(Q^{-}_{\frac{1}{2}}\right)} \leq \mathrm{M}_0.\left[\|u\|_{L^{\infty}(Q^{-}_1)} + \|f\|_{L^{q, r}(Q^{-}_1)}\right],
$$
where $\alpha \in (0, 1)$ is defined by \eqref{Sharp_alpha}, $\theta$ is given by \eqref{Int-Scal}, and
$$
   [u]_{C^{\alpha, \frac{\alpha}{\theta}}\left(Q^{-}_{\frac{1}{2}}\right)} \defeq \sup_{0< \rho \leq \rho_0} \left(\inf_{(x_0, t_0) \in Q^{-}_{\rho_0}} \frac{\|u - u(x_0, t_0)\|_{L^{\infty}\left(Q^{-}_{\rho_0}(x_0, t_0) \cap Q^{-}_{\frac{1}{2}}\right)}}{\rho_0^{\alpha}}\right).
$$
\end{theorem}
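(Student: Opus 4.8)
The plan is to combine intrinsic scaling with a geometric tangential (compactness) analysis, in the spirit of Teixeira--Urbano \cite{TU14} and Ara\'ujo \cite{Ara20}, adapted to the double degeneracy $\Phi(|u|)|\nabla u|^{p-2}$. By translation we may assume the reference point is the origin, and it suffices to establish, at an arbitrary interior point, a decay estimate $\sup_{Q^{-}_{\rho}}\lvert u-u(0,0)\rvert\le C\rho^{\alpha}$ for all small $\rho$, with $C$ universal; a covering/translation argument then upgrades this to the seminorm bound on $Q^{-}_{1/2}$ with a constant $\mathrm{M}_0$ of the asserted form. After multiplying $u$ by a small universal constant and rescaling space--time according to the intrinsic law $(x,t)\mapsto(\lambda x,\lambda^{\theta}t)$, with $\theta$ as in \eqref{Int-Scal}, we reduce to the normalized regime $\|u\|_{L^{\infty}(Q^{-}_{1})}\le 1$ and $\|f\|_{L^{q,r}(Q^{-}_{1})}\le\varepsilon_0$ for a universal $\varepsilon_0$ to be chosen. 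Here $\theta=p-\alpha(m+p-3)$ is precisely the exponent making $\partial_t u$ and $\Div(\Phi(|u|)|\nabla u|^{p-2}\nabla u)$ scale consistently with $\alpha$-H\"older scaling, and the compatibility condition \eqref{w-cc} (hence \eqref{cc}) is exactly what makes the rescaled source carry a strictly positive power of $\lambda$, so it does not deteriorate along the iteration.

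\smallskip

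\noindent\textbf{Tangential approximation.} The core auxiliary step is a compactness lemma: for every $\delta>0$ there is $\varepsilon_0>0$ such that, if $u$ is a normalized weak solution with $\|f\|_{L^{q,r}(Q^{-}_{1})}\le\varepsilon_0$ and $\Theta_{\mathcal{A}}(\cdot,\cdot,0,0)\le\varepsilon_0$ in $Q^{-}_{1}$, then there is a weak solution $\mathfrak{h}$ of the homogeneous frozen-coefficient problem $\partial_t\mathfrak{h}-\Div(\mathcal{A}(0,0,\mathfrak{h},\nabla\mathfrak{h}))=0$ in $Q^{-}_{1/2}$ with $\|u-\mathfrak{h}\|_{L^{\infty}(Q^{-}_{1/2})}\le\delta$. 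This follows by contradiction: a sequence of normalized solutions whose sources and coefficient oscillations tend to $0$ is, by the Caccioppoli estimates available under \eqref{w-cc} (cf. \cite[Ch.\,2, \S1]{D93}, \cite[Proposition 3.1]{FS08}) and the known local $C^{0,\alpha}$ theory, precompact in $C_{\mathrm{loc}}$, and any limit solves the frozen homogeneous equation. Into $\mathfrak{h}$ one feeds the sharp a priori regularity of the homogeneous problem with frozen coefficients --- the $C^{\alpha_{\mathrm{Hom}}}$ estimate that, for the $(m,p)$-model, also underlies the Liouville-type result used in the paper --- to obtain $\lvert\mathfrak{h}(x,t)-\mathfrak{h}(0,0)\rvert\le C_{\mathrm{Hom}}\bigl(\lvert x\rvert^{\alpha_{\mathrm{Hom}}}+\lvert t\rvert^{\alpha_{\mathrm{Hom}}/\theta}\bigr)$ on an intrinsic cylinder.

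\smallskip

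\noindent\textbf{Iteration and conclusion.} Fix $\lambda\in(0,\tfrac12)$. Combining the approximation with the regularity of $\mathfrak{h}$,
$$
\sup_{Q^{-}_{\lambda}}\lvert u-u(0,0)\rvert\ \le\ C_{\mathrm{Hom}}\,\lambda^{\alpha_{\mathrm{Hom}}}+3\delta .
$$
Choose $\lambda$ so small that $C_{\mathrm{Hom}}\lambda^{\alpha_{\mathrm{Hom}}}\le\tfrac12\lambda^{\alpha}$ --- which the iteration forces us to do with $\alpha$ strictly below the effective gain extracted from $\alpha_{\mathrm{Hom}}$ once the degeneracy weights $p_m$, $m+p-2$, $m+p-3$ are accounted for --- and then set $\delta=\tfrac16\lambda^{\alpha}$, which fixes $\varepsilon_0$; this yields $\sup_{Q^{-}_{\lambda}}\lvert u-u(0,0)\rvert\le\lambda^{\alpha}$. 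One then passes to $u_1(x,t)\defeq\lambda^{-\alpha}\bigl(u(\lambda x,\lambda^{\theta}t)-u(0,0)\bigr)$ and checks that $u_1$ again satisfies (P1)--(P4) with the \emph{same} universal constants: the coefficient oscillation improves since $\omega_{\mathcal{A}}(\lambda\,\lvert\cdot\rvert)\le\omega_{\mathcal{A}}(\lambda)$, the source norm shrinks by the positive power of $\lambda$ granted by \eqref{w-cc}, and the factor $|u|^{m-1}$ is treated according to whether $u(0,0)$ sits near the zero set $\mathcal{Z}(u)$ --- where the full $(m,p)$-degeneracy is active and the time scale is governed by $p_m$ as in \eqref{p_m} --- or is bounded away from it, in which case $\Phi(|u|)$ is essentially frozen to a positive constant and the equation behaves like the inhomogeneous evolution $p$-Laplacian. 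Iterating, $\sup_{Q^{-}_{\lambda^k}}\lvert u-u(0,0)\rvert\le\lambda^{k\alpha}$ for all $k\in\N$; interpolating over dyadic scales (for $\lambda^{k+1}<\rho\le\lambda^{k}$ one gets $\sup_{Q^{-}_{\rho}}\lvert u-u(0,0)\rvert\le\lambda^{k\alpha}\le\lambda^{-\alpha}\rho^{\alpha}$) gives the pointwise $C^{\alpha,\alpha/\theta}$ estimate, and the covering argument above completes the proof.

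\smallskip

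\noindent\textbf{Main obstacle.} The delicate point is the intrinsic bookkeeping across the iteration: at scale $\lambda^{k}$ the cylinder's time length $\lambda^{k\theta}$ is slaved, through $\theta$, to the current oscillation $\lambda^{k\alpha}$, and one must verify that each rescaled problem keeps the structure (P1)--(P4) with unchanged universal constants while the source genuinely decays --- this is where \eqref{w-cc}/\eqref{cc} are used in an essential, quantitative way. Equally delicate is the dichotomy near versus away from $\mathcal{Z}(u)$, which is exactly what produces both the inner $\max$ and the factor $p_m$ in \eqref{Sharp_alpha}: near the zero set $\Phi(|u|)$ cannot be frozen and must instead be absorbed by a further change of variables, forcing two separate sub-iterations whose exponents are optimized against $\alpha_{\mathrm{Hom}}$ and then compared, the outcome being finally intersected with the source-driven ceiling $\frac{(pq-n)r-pq}{q[(r-1)(m+p-2)+1]}$. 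The compactness lemma itself is comparatively routine given the Caccioppoli estimates; the genuinely analytic input feeding the whole scheme is the sharp exponent $\alpha_{\mathrm{Hom}}$ together with the Liouville property for the frozen homogeneous equation.
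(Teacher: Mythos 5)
Your overall architecture (normalization, compactness approximation to the frozen homogeneous profile, geometric iteration in intrinsic cylinders, dyadic interpolation) matches the paper's, but the central iteration step as you wrote it contains a genuine gap. You pass to $u_1(x,t)\defeq\lambda^{-\alpha}\bigl(u(\lambda x,\lambda^{\theta}t)-u(0,0)\bigr)$ and claim that $u_1$ ``again satisfies (P1)--(P4) with the same universal constants.'' This is false in general: the diffusion coefficient $\Phi(|u|)|\nabla u|^{p-2}$ is not invariant under subtraction of constants, so if $u(0,0)\neq 0$ the equation satisfied by $u_1$ has its degeneracy located at the level set $\{u_1=-\lambda^{-\alpha}u(0,0)\}$ rather than at $\{u_1=0\}$, and the ratio between $|u_1|^{m-1}$ and the actual modulus of ellipticity is not controlled by the original $\gamma_1,\gamma_2$. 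This is precisely the obstruction the paper flags (``$u\mapsto \partial_t u-\Div(m|u|^{m-1}|\nabla u|^{p-2}\nabla u)$ is not translation invariant by constant mappings''), and it is why the paper never iterates the translated quantity $u-u(0,0)$. Instead, Lemmas \ref{1stStepInduc} and \ref{induction} iterate the plain bound $\|u\|_{L^{\infty}(Q^{-}_{\lambda^{k}})}\le\lambda^{k\alpha}$ \emph{only under the pointwise smallness hypothesis} $|u(0,0)|\le\lambda^{k\alpha}/4$; under this hypothesis the rescaled function $v_k(x,t)=\lambda^{-k\alpha}u(\lambda^{k}x,\lambda^{k\theta}t)$ (with no constant subtracted) does inherit (P1)--(P4), and the condition $\alpha\le\frac{r(pq-n)-pq}{q[(m+p-2)r-(m+p-3)]}$ is exactly what keeps the rescaled source small.

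What is then missing from your argument is the complementary regime, which in the paper carries the other half of the proof of Theorem \ref{t1.2}: when $|u(0,0)|$ is large relative to the scale (i.e.\ $\rho<\mu$ with $\mu\defeq(4|u(0,0)|)^{1/\alpha}$), one rescales by $\mu$, uses the a priori H\"{o}lder continuity of Theorem \ref{ThmHolderEst} to show that $w(x,t)=\mu^{-\alpha}u(\mu x,\mu^{\theta}t)$ stays bounded away from zero ($|w|\ge 1/8$) on a cylinder $Q^{-}_{\rho_0}$ of \emph{universal} size, so that $\Phi(|w|)$ is trapped between positive constants and the equation becomes an inhomogeneous evolution $p$-Laplacian; the sharp Teixeira--Urbano exponent $\hat\alpha$ of Theorem \ref{ThmSharp-p-Laplace} (which dominates $\alpha$) then yields the decay of $|u-u(0,0)|$ at those scales, and the three ranges of $\rho$ are patched together with the universal lower bound on $\rho_0$. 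You allude to a ``dichotomy near versus away from $\mathcal{Z}(u)$,'' but you never build it into the scheme: as written, every step of your iteration presupposes the translated function keeps the doubly degenerate structure, which is the one thing that fails. Also note that the inner $\max$ in \eqref{Sharp_alpha} does not come from that dichotomy but from the two different ways of converting the time regularity $C^{0,\alpha_{\mathrm{Hom}}/p_m}$ of the homogeneous profile into a bound on the intrinsic cylinder $Q^{-}_{\lambda}$ of time length $\lambda^{\theta}$ (using $\theta\le p$ versus $\theta\ge\frac{2(p-1)}{p+m-2}$), a bookkeeping step your sketch compresses into ``the effective gain extracted from $\alpha_{\mathrm{Hom}}$'' without justification.
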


To the best of the authors' knowledge, this is the first time that inhomogeneous problems with variable coefficients for this class of doubly degenerate operators can be treated in a unified approach and under a spectrum of general assumptions.

Mathematically, the former result states that if the associated homogeneous equation with constant coefficients enjoys a good regularity theory. Then, weak solutions to \eqref{1.1} inherits some regularity estimates, provided the oscillation of the coefficients is under control in a continuous fashion and the source term obeys an appropriate integrability regime (see Lemmas \ref{l2.1} and \ref{1stStepInduc} for details).

Now, we notice that
\begin{equation}\label{AsymAlpha}
\mathrm{M}_{\sharp}< \alpha_{\mathrm{Hom}} \quad \text{for any} \quad p>2 \quad \text{and} \quad m>1,
\end{equation}
where
$$
\mathrm{M}_{\sharp} \defeq \max\left\{\frac{\alpha^{-}_{\mathrm{Hom}}p}{p_m+\alpha_{\mathrm{Hom}}(m+p-3)}, \frac{2\alpha^{-}_{\mathrm{Hom}}(p-1)}{p_m(m+p-2)}\right\}.
$$

Moreover, we must understand \eqref{Sharp_alpha} as follows:
{\scriptsize{
$$
\left\{
\begin{array}{cccccl}
  \text{If} & \mathrm{M}_{\sharp}\le \frac{(pq-n)r-pq}{q[(r-1)(m+p-2)+1]} & \text{then} & u \in C_{\text{loc}}^{\alpha, \frac{\alpha}{\theta}} & \text{for any} & \alpha < \mathrm{M}_{\sharp};\\
  \text{If} & \mathrm{M}_{\sharp} > \frac{(pq-n)r-pq}{q[(r-1)(m+p-2)+1]} & \text{then} & u \in C_{\text{loc}}^{\alpha, \frac{\alpha}{\theta}} & \text{for} & \alpha = \frac{(pq-n)r-pq}{q[(r-1)(m+p-2)+1]}.
\end{array}
\right.
$$}}

Regarding the optimal value of $\alpha_{\mathrm{Hom}}$, the Barenblatt function
$$
\mathcal{B}_{m, p}(x, t) \defeq \left\{
\begin{array}{cc}
 \frac{1}{t^{\lambda_0}}\left[1-b(m, n, p)\left(\frac{|x|}{t^{\frac{1}{\lambda_0}}}\right)^{\frac{p}{p-1}}\right]_{+}^{\frac{p-1}{m+p-3}} & t>0 \\
 0 & t \le 0
\end{array}
\right.
$$
is the fundamental solution for equation
$$
  \frac{\partial u}{\partial t}-\Div(m\lvert u\rvert^{m-1}\lvert \nabla u \rvert^{p-2}\nabla u) = 0,
$$
where
$$
\lambda_0 = n(m+p-3) \quad \text{and} \quad b(m, n, p) = \frac{p-1}{p}\frac{m+p-3}{(m+p-2)\lambda_0^{\frac{1}{p-1}}},
$$
which suggests (see \cite[Sec. 8]{GS16}, \cite[p. 2012]{Iva95} and \cite[Remark 3.4]{Iva00}) that such an optimal H\"{o}lder exponent should be
\begin{equation}\label{SharpAlpha_Hom}
  \alpha_{\mathrm{Hom}} = \min\left\{1, \frac{p-1}{m+p-3}\right\}.
\end{equation}

In turn, we can re-write the second exponent in \eqref{Sharp_alpha} as follows
{\scriptsize{
$$
\frac{(pq-n)r-pq}{q[(r-1)(m+p-2)+1]} = \frac{p\left[1-\left(\frac{n}{pq}+\frac{1}{r}\right)\right]}{p\left[1-\left(\frac{n}{pq}+\frac{1}{r}\right)\right] + \left\{\left[\frac{3}{r}+m\left(1-\frac{1}{r}\right)+\frac{n}{q}\right]-2\right\}} \in (0, 1)
$$}}
by using the compatibility conditions \eqref{cc}.

It is also important to stress that
$$
\alpha \to \min\left\{\mathrm{M}_{\sharp}, \frac{p}{m+p-2}\right\} \quad \text{as} \quad q, r \to \infty.
$$
Moreover, $\frac{p}{m+p-2} \geq \frac{p-1}{m+p-3}$ if and only if $m\geq 2$.

In this context, by using \eqref{AsymAlpha} and \eqref{SharpAlpha_Hom} we conclude that
$$
0 <\alpha \leq \frac{p-1}{m+p-3}  \quad \text{as} \quad q = r = \infty \,\,\, \text{for any} \,\,\, p>2 \,\,\, \text{and} \,\,\, m\ge 2.
$$
\bigskip

In the sequel, we present an overview on recent regularity results which our H\"{o}lder estimates in Theorem \ref{t1.2} recover, as well as, to some extent, improve in a unified fashion.

\begin{table}[h]
\centering
\resizebox{\textwidth}{!}{
\begin{tabular}{|c|c|c|c|c|}
\cline{1-4}
 \textbf{Model equation}&{\textbf{Compatibility conditions}} & {\bf Sharp H\"{o}lder regularity} &{\textbf{References}}  \\\cline{1-4}
 $\frac{\partial u}{\partial t}-\Delta u=f$ & $1< \frac{n}{q} + \frac{2}{r}< 2$ & $\alpha=2-\big(\frac{2}{r}+\frac{n}{q}\big)$& \cite{daST17} \,\,\text{and}\,\, \cite{K08} \\\cline{1-4}
 $\frac{\partial u}{\partial t}-\Delta u^{m}=f$&$\frac{1}{r}+\frac{n}{2q}<1$&$\alpha=\min\Big\{\frac{\alpha^{-}_{\mathrm{Hom}}}{m},\frac{[(2q-n)r-2q]}{q[mr-(m-1)]}\Big\}$& \cite{AMU20}\\\cline{1-4}
 $\frac{\partial u}{\partial t}-\Delta u^{m}=f$ & $\frac{1}{r}+\frac{n}{2q} <1$ & $\alpha=\min\Big\{\frac{2\alpha^{-}_{\mathrm{Hom}}}{2+(m-1)\alpha_{\mathrm{Hom}}},\frac{(2q-n)r-2q}{q[mr-(m-1)]}\Big\}$ & \cite{Diehl21} \\\cline{1-4}
 $\frac{\partial u}{\partial t}-\Delta_{p} u=f $ & $\frac{n}{q} + \frac{2}{r}<1< \frac{1}{r}+\frac{n}{pq}$ & $\alpha=\frac{(pq-n)r-pq}{q[(p-1)r-(p-2)]}$ & \cite{TU18} \\\cline{1-4}
$\frac{\partial u}{\partial t}-\Div{(m\lvert u\rvert^{m-1}\lvert \nabla u\rvert^{p-2}\nabla u)}=f $ & $\frac{1}{r}+\frac{n}{pq}<1 \,\, \text{and} \,\, \frac{3}{r}+\frac{n}{q}>2$ & $\alpha=\min\Big\{\frac{\alpha^{-}_{\mathrm{Hom}}(p-1)}{m+p-2},\frac{(pq-n)r-pq}{q[(r-1)(m+p-2)+1]}\Big\}$ & \cite{Ara20} \\\cline{1-4}
\end{tabular}}
\end{table}

\bigskip

In this final part, we will present a number of scenarios where our results improve the former Ara\'{u}jo's result in \cite[Theorem 1.1]{Ara20} among other ones. For this end, since
\begin{equation}\label{EqExpoentes}
  \frac{2(p-1)}{p_m(m+p-2)} \le \frac{p-1}{m+p-2} \quad \text{for any} \quad p\geq 2 \quad \text{and} \quad m\ge 1
\end{equation}
then, we must impose (for $m>1$) the following algebraic condition:
\begin{equation}\label{CImprov}
  \frac{\alpha^{-}_{\mathrm{Hom}}p}{p+\alpha_{\mathrm{Hom}}(m+p-3)} \ge \frac{\alpha^{-}_{\mathrm{Hom}}(p-1)}{m+p-2} \quad \Leftrightarrow \quad \alpha_{\mathrm{Hom}} \leq \frac{m-1}{p-1}.\frac{p}{p+m-3}.
\end{equation}
Such a condition brings to light by comparing the regularity exponents in \eqref{AraujoExponent} with the ones in \eqref{Sharp_alpha} by using \eqref{EqExpoentes}.
\smallskip

In this point, we will analyse the following cases:
\smallskip

\begin{itemize}

  \item[({\bf 1})] For instance, in the particular case $m=1$, which is not included in \eqref{CImprov}, we must to highlight that $\alpha_{\mathrm{Hom}} = 1$ and $p_m = 2$ in \eqref{Sharp_alpha}. As a result,
      $$
         \alpha = \frac{(pq-n)r-pq}{q[(p-1)r-(p-2)]},
      $$
      which concurs with Teixeira-Urbano's sharp estimates in \cite[Theorem 3.4]{TU14}.
\smallskip

  \item[({\bf 2})] If $p=2$ then the condition \eqref{CImprov} holds true for any $m>1$. In particular, we recover the recent improved regularity estimates in \cite[Theorem 2.5]{Diehl21} (see also \cite[Theorem 6]{AMU20}).
\smallskip
\smallskip

  \item[({\bf 3})] If $\frac{m-1}{p-1}.\frac{p}{p+m-3} \ge 1$, then \eqref{CImprov} holds trivially. Such a condition implies that
  $$
  m \ge p\left[(p-1)\left(1-\frac{3}{p}\right)+1\right].
  $$
\smallskip

  \item[({\bf $4^{\ast}$})] On the other hand, if $\frac{m-1}{p-1}.\frac{p}{p+m-3} < 1$ in \eqref{CImprov}, then we could compar it with the expected (and conjectured) optimal upper bound in \eqref{SharpAlpha_Hom} (for $m \ge 2$). Hence,
$$
\frac{p-1}{m+p-3} \leq \frac{m-1}{p-1}.\frac{p}{p+m-3}
$$
if and only if
$$
\max\left\{2, \frac{(p-1)^2}{p}+1\right\} \le m < p\left[(p-1)\left(1-\frac{3}{p}\right)+1\right].
$$

\end{itemize}

Therefore, such above scenarios provide necessary conditions in order to our H\"{o}lder regularity estimates improve former ones in a unified fashion. Furthermore, to find the full description of the regions where improved estimates do hold is a non-trivial task, because the explicit representation of $\alpha_{\mathrm{Hom}}$ is an open issue.

\smallskip

In the sequel, we will define the region where improved estimates take place:
\begin{equation}\label{ImprovedRegions}
  \mathcal{I}_{m, p} \defeq \{(m, p)\in (1, \infty)\times [2, \infty): \eqref{CImprov} \quad \text{does hold true}\}.
\end{equation}

We must emphasize that Item (2), (3) and (possibly (4)) ensure that $\mathcal{I}_{m, p} \neq \emptyset$.

\smallskip

Finally, as a consequence of Theorem \eqref{t1.2} we obtain the following result:

\begin{corollary} Let $u$ be a bounded weak solution of \eqref{1.1} in $Q^{-}_1$. Suppose that assumptions of Theorem \eqref{t1.2} are in force. Suppose further that
$$
\frac{1}{r}+\frac{n}{pq}<1 \quad \text{and} \quad \frac{3}{r}+m\left(1-\frac{1}{r}\right)+\frac{n}{q}\le2
$$
there holds. Then, $u \in C_{\text{loc}}^{\alpha, \frac{\alpha}{\theta}}(Q^{-}_1)$ (in the parabolic sense), where $\alpha \in \left(0, \mathrm{M}_{\sharp}\right)$.
\end{corollary}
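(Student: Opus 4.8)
The plan is to obtain the corollary as a direct consequence of Theorem \ref{t1.2}: one only has to check that, in the integrability window prescribed here, the \emph{source exponent} occurring in \eqref{Sharp_alpha},
$$
\mathcal{E}_{\sharp}\defeq\frac{(pq-n)r-pq}{q[(r-1)(m+p-2)+1]},
$$
is never below the \emph{homogeneous exponent} $\mathrm{M}_{\sharp}$, so that the minimum in \eqref{Sharp_alpha} collapses to $\mathrm{M}_{\sharp}$ and one lands in the first of the two alternatives described immediately after Theorem \ref{t1.2}.

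First I would observe that the hypothesis $\frac1r+\frac n{pq}<1$ already forces $r>1$ and $(pq-n)r-pq>0$, so $\mathcal{E}_{\sharp}$ is a well-defined positive number, and the algebraic identity recorded just after \eqref{SharpAlpha_Hom},
$$
\mathcal{E}_{\sharp}=\frac{p\left[1-\left(\frac{n}{pq}+\frac1r\right)\right]}{p\left[1-\left(\frac{n}{pq}+\frac1r\right)\right]+\left\{\left[\frac3r+m\left(1-\frac1r\right)+\frac nq\right]-2\right\}},
$$
is available here as well, since it is a genuine identity requiring only $r>1$ rather than the full \eqref{cc}: its common numerator is strictly positive and its denominator equals $(m+p-2)-\frac{m+p-3}{r}>0$ for all $m\ge1$, $p\ge2$.

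Next I would invoke the extra hypothesis $\frac3r+m(1-\frac1r)+\frac nq\le2$, which makes the curly bracket in the denominator non-positive; hence the denominator of $\mathcal{E}_{\sharp}$ does not exceed its numerator, and therefore $\mathcal{E}_{\sharp}\ge1$, with equality exactly on the threshold $\frac3r+m(1-\frac1r)+\frac nq=2$. Since $\mathrm{M}_{\sharp}\le\alpha_{\mathrm{Hom}}\le1\le\mathcal{E}_{\sharp}$ — the bound $\mathrm{M}_{\sharp}\le\alpha_{\mathrm{Hom}}$ being \eqref{AsymAlpha} when $p>2$, $m>1$, and, at the endpoints $m=1$ or $p=2$, a one-line comparison of the two fractions defining $\mathrm{M}_{\sharp}$ with $\alpha_{\mathrm{Hom}}$ — the expression \eqref{Sharp_alpha} reduces to $\alpha=\mathrm{M}_{\sharp}$. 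Feeding this into Theorem \ref{t1.2} (with $\theta=p-\alpha(m+p-3)$ read off \eqref{Int-Scal}) then yields $u\in C^{\alpha,\frac{\alpha}{\theta}}_{\mathrm{loc}}(Q^{-}_1)$ for every $\alpha\in(0,\mathrm{M}_{\sharp})$, which is the assertion.

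Once Theorem \ref{t1.2} is in place the argument is purely algebraic bookkeeping, so I do not anticipate a real obstacle; the only points that deserve attention are (i) confirming that the window $\{\frac1r+\frac n{pq}<1\}\cap\{\frac3r+m(1-\frac1r)+\frac nq\le2\}$ — precisely the regime complementary, along the second inequality, to \eqref{cc} — remains compatible with the existence of a bounded weak solution and with the Caccioppoli-type estimates underlying Theorem \ref{t1.2}, and (ii) the degenerate endpoints $m=1$ and/or $p=2$, where $\mathrm{M}_{\sharp}$ may coincide with $\alpha_{\mathrm{Hom}}=1$, so that one should make sure the strict inclusion $\alpha\in(0,\mathrm{M}_{\sharp})$ is indeed the correct reading of \eqref{Sharp_alpha} there as well.
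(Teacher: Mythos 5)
Your proposal is correct and follows essentially the same (implicit) route as the paper: the corollary is stated there as an immediate consequence of Theorem \ref{t1.2}, obtained exactly as you do by using the algebraic rewriting of the second exponent in \eqref{Sharp_alpha} recorded after \eqref{SharpAlpha_Hom}, observing that under the extra hypothesis the bracketed term is nonpositive while the common numerator stays positive, so that exponent is at least $1\ge \mathrm{M}_{\sharp}$ and the minimum collapses to $\mathrm{M}_{\sharp}$, giving $\alpha\in(0,\mathrm{M}_{\sharp})$. Your side remarks (the endpoint cases $m=1$, $p=2$ via \eqref{AsymAlpha}, and the fact that this integrability window lies on the complementary side of the second inequality in \eqref{w-cc}, which only affects whether the source term limits the exponent) are consistent with how the paper uses its machinery.
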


In conclusion, we highlight that
$$
\mathrm{M}_{\sharp} \to \alpha_{\mathrm{Heat \,\,Operator}}  = 1 \quad \text{as} \quad m\to 1^{+} \quad \text{and} \quad p \to 2^{+}.
$$
This insight suggests that we must examine the stability of our estimates as the parameters of the equation approach the linear case. We will address this statement in a precise quantitative way in Section \ref{SecAsymEst}.

\subsection{Main obstacles and strategies}

It is worth mentioning that different from the linear setting (i.e. $m=1$ and $p=2$), the analysis for the corresponding doubly degenerate one, i.e. $m>1$ and $p>2$, it is more challenging and it involves the development of new ideas and techniques. For this very reason, in our approach, we make use of an $\theta-$intrinsic scaling technique inspired by \cite{AdaSRT}, \cite{DOS18}, \cite{TU14} and \cite{U08}, where $\theta>0$ is the \textit{intrinsic scaling factor} for the temporal variable, which depends on universal parameters of problem and integrability properties of the source term.

In brief, our strategy follows the ideas from \cite{Ara20}, \cite{Diehl21} and \cite{TU14} (see also \cite{AdaSRT} and \cite{JVSilva19}). Nevertheless, we establish a finer and sharper version of \cite[Lemma 3.2]{Ara20}, which allows us to transfer (in a continuous fashion) available regularity estimates of the homogeneous case to the inhomogeneous one, under a smallness regime on the data.

The core idea behind the proof of Theorem \ref{t1.2} is performing a geometric decay argument along those points around which the equation degenerates, i.e. where weak solutions become very small. In effect, the purpose will be to make use of an $(m, p)-$ approximation in a continuous fashion (Lemma \ref{l2.1}), thus ensuring that weak solutions oscillate in a suitable ``geometric'' manner, i.e.
{\small{
$$
C^0-\text{closeness}\quad \stackrel[\Longrightarrow]{\text{Geometric estimate}}{}    \quad \displaystyle\sup_{Q^{-}_{\rho}(x_0, t_0)} \frac{\left|u(x, t)-u(x_0, t_0)\right|}{\rho^{\alpha}}\leq 1,
$$}}
thereby getting the desired H\"{o}lder estimate. Nevertheless, different from $C^{\alpha}$ regularity for degenerate models with continuous coefficients, i.e.
$$
 \frac{\partial u}{\partial t}-\Div(\mathfrak{a}(x, t)\lvert \nabla u \rvert^{p-2}\nabla u) = f(x, t) \quad \text{in} \quad \Omega_T \quad \text{for} \quad p > 2,
$$
we can no longer proceed with an interaction scheme as the one in \cite{TU14}, i.e.
$$
 \displaystyle\sup_{Q^{-}_{\rho^k}(x_0, t_0)} \frac{\left|u(x, t)-\mathfrak{c}_k\right|}{\rho^{k\alpha}}\leq 1 \quad \stackrel[\Longrightarrow]{\text{Dini-Campanato}}{{\small\text{embedding}}} \,\,\,u \,\, \text{is} \,\,\,C^{\alpha, \frac{\alpha}{\theta}} \quad \text{at}\,\,\,(x_0,t_0),
$$
because we do not know, \textit{a priori}, which PDE is fulfilled by
$$
     u_k(x, t) \defeq \frac{u(\rho^k x + x_0, \rho^{k \theta}t + t_0) -  \mathfrak{c}_k}{\rho^{k \alpha}}, \quad \text{for}\quad \{\mathfrak{c}_k\}_{k\in\mathbb{N}}\quad \text{a sequence of constants},
$$
since $u \mapsto  \frac{\partial u}{\partial t}-\Div(m\lvert u\rvert^{m-1}\lvert \nabla u \rvert^{p-2}\nabla u)$ is not translation invariant by constant mappings. Nonetheless, we are able to derive quantitative information on the oscillation of $u$, provided we get a sort of suitable control under the magnitude of the solutions at interior points (see Lemma \ref{induction} for more details).

 Another important aspect of our approach consists of removing the restriction of analyzing the desired $C^{0,\alpha}$ regularity estimates just along the \textit{a priori} unknown set of zero points of solutions (i.e. the set where weak solutions vanishes), where the diffusivity of the equation collapses (cf. \cite{DOS18}, \cite{daSRS18} and \cite{daSS18}, where sharp and improved regularity estimates were obtained along the set of certain degenerate points of existing solutions).

\section{Some useful auxiliary results}

In this section we will present some auxiliary results for our purposes. Next, we define the notion of weak solutions for our problem.

\begin{definition}\label{d2.1} A locally bounded function $u$ is called a local weak solution of \eqref{1.1} in $\Omega\times(0,T]$, if $u \in C_{\loc}(0, T; L^2(\Omega))$ and $\Phi^{\frac{1}{p}}(|u|)|\nabla u| \in L^{p}_{loc}(\Omega_{T})$ for every compact set $K\subset\Omega$, every $[t_1, t_2] \subset (0, T]$ and $\psi\in H^1_{\loc}(0, T; L^2(K))\cap L^p_{\loc}(0,T; W_{\loc}^{1, p}(K))$ there holds
$$
\displaystyle \left.\int_{K} u \psi\,dx \right|^{t_2}_{t_1} + \int_{t_1}^{t_2} \int_{K} \left[-u \frac{\partial \psi}{\partial t} + \mathcal{A}(x, t,u, \nabla u) \cdot \nabla \psi \right]\,dx\,dt = \int_{t_1}^{t_2} \int_{K} f \psi\,dx\,dt.
$$
\end{definition}

We also can present an equivalent definition of weak solutions via Steklov average, which allow us to prove a Caccioppoli type estimate. Indeed, for each $0<h<T$ we denote $u_{h}$ as the Steklov average of $u$ as follows:
\begin{equation}
u_{h}\colon= \left\{
\begin{array}{rcrcl}
  \frac{1}{h}\int\limits_{t}^{t+h} u(.,\tau)d\tau, \ {if} \ t\in (0,T-h],\\
  0 ,\  {if} \ t \in (T-h,T]
\end{array}
\right.
\end{equation}

Before delivering the Caccioppoli type estimate, let us present the following energy estimate:

\begin{proposition}\label{p2.1} Let $u$ a weak solution of \eqref{1.1}. Then, there exists a constant $\gamma=\gamma(\mathrm{C}_{1}, \mathrm{C}_{2}, n, K \times [t_{1},t_{1}],\lvert \lvert f \rvert \rvert_{L^{q, r}(\Omega_{T})})$, such that for each cylinder $\Omega_{T}$ and each level $k$ holds :
{\scriptsize{
\begin{equation*}
    \sup\limits_{t_{1}<t<t_{2}} \int\limits_{K} (u-k)_{\pm}^{2}\xi^pdx +\gamma^{-1} \int\limits_{t_{1}}^{t_{2}}\int\limits_{K} \Phi(|u|)|\nabla(u-k)_{\pm}|^{p}\xi^{p} dxdt \leq
\end{equation*}}}
{\scriptsize{
\begin{equation*}
\gamma \int_{t_{1}}^{t_{2}}\int_{K} (u-k)_{\pm}^{2}\xi^{p-1}|\xi_{t}|dxdt+\gamma\int_{t_{1}}^{t_{2}}\int_{K}\Phi(|u|)(u-k)^{p}_{\pm}|D\xi|^{p} dxdt+ \gamma\left(\int_{t_{1}}^{t}\lvert \mathrm{A}^{\pm}_{k}(t)\rvert^{\frac{r}{q}} dt\right)^{\frac{2p.(1+\kappa)}{r}}
\end{equation*}}}
\end{proposition}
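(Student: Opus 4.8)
The statement to prove is Proposition~\ref{p2.1}, an energy (Caccioppoli-type) estimate for weak solutions of the doubly degenerate equation \eqref{1.1}. Let me sketch the proof.

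\textbf{Plan of proof.} The strategy is the classical testing procedure adapted to the doubly degenerate structure, carried out rigorously through Steklov averages to circumvent the lack of time-regularity of weak solutions. First I would fix a cylinder $K\times[t_1,t_2]\subset\Omega_T$, a level $k\in\R$, and a smooth cutoff $\xi\in C^\infty$ with $\xi$ compactly supported in space on $K$; then write the weak formulation in Steklov-averaged form and insert the test function
$$
\psi = \pm\,(u_h-k)_{\pm}\,\xi^p\,\chi_{[t_1,t]},
$$
which is admissible in the sense required by Definition~\ref{d2.1} after the Steklov regularization. The parabolic (time-derivative) term produces, upon letting $h\to 0^+$, the quantity $\frac12\int_K (u-k)_\pm^2\xi^p\,dx\big|_{t_1}^{t}$ together with a term $-\int\int (u-k)_\pm^2\,\xi^{p-1}\xi_t\,dx\,dt$ that is absorbed into the right-hand side via $|\xi^{p-1}\xi_t|\le \xi^{p-1}|\xi_t|$.

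\textbf{Key steps.} The diffusion term $\int\int \mathcal{A}(x,t,u,\nabla u)\cdot\nabla\psi$, after expanding $\nabla\psi = \xi^p\,\nabla(u-k)_\pm \pm p\,\xi^{p-1}(u-k)_\pm\nabla\xi$, splits into a good term and a cross term. For the good term I would use the degenerate ellipticity (P1): $\langle \mathcal{A},\nabla(u-k)_\pm\rangle\,\xi^p \ge \mathrm{C}_1\,\Phi(|u|)\,|\nabla(u-k)_\pm|^p\,\xi^p$, which yields the second term on the left-hand side. For the cross term I would use the growth bound (P2), $|\mathcal{A}|\le \mathrm{C}_2\Phi(|u|)|\nabla u|^{p-1}$, together with Young's inequality with exponents $p/(p-1)$ and $p$ and a small parameter $\varepsilon>0$, to absorb a portion of $\int\int \Phi(|u|)|\nabla(u-k)_\pm|^p\xi^p$ into the left-hand side, leaving a remainder controlled by $\int\int \Phi(|u|)(u-k)_\pm^p|\nabla\xi|^p$. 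Finally, for the source term $\int\int f\psi$, I would estimate $|f\psi|\le |f|(u-k)_\pm\xi^p$ and invoke the mixed-norm Hölder inequality in $L^{q,r}(\Omega_T)$ together with the parabolic Sobolev (or Gagliardo–Nirenberg) embedding: this is precisely where the compatibility conditions \eqref{w-cc}/\eqref{cc} enter, and where the exponent $\kappa>0$ and the power $\tfrac{2p(1+\kappa)}{r}$ in the stated right-hand side arise, with the $L^\infty$-bound on $u$ and $\|f\|_{L^{q,r}}$ absorbed into the constant $\gamma$.

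\textbf{Main obstacle.} I expect the delicate point to be the treatment of the source term and the bookkeeping of the weight $\Phi(|u|)$. Because $\Phi(|u|)$ is neither bounded below away from zero nor necessarily bounded above (it behaves like $|u|^{m-1}$ near the degeneracy set and is bounded on $(\sigma_0,\infty)$), one must carefully use the two-sided control in (P2) together with the a priori bound $\|u\|_{L^\infty}$ to ensure all terms containing $\Phi(|u|)$ are finite and to fold the dependence on $\|u\|_{L^\infty(\Omega_T)}$ and $\|f\|_{L^{q,r}(\Omega_T)}$ into $\gamma$; in particular the term $\Phi(|u|)(u-k)_\pm^p|D\xi|^p$ must be kept with its weight rather than crudely bounded, since later applications (the intrinsic scaling in the proof of Theorem~\ref{t1.2}) exploit that structure. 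The second subtlety is verifying admissibility of the Steklov-averaged test function and justifying the passage $h\to 0^+$ in each term; this is routine but must be done so that the identity holds for a.e.\ $t\in(t_1,t_2)$, after which one takes the supremum over $t$. Combining these estimates and choosing $\varepsilon$ small enough to perform the absorption yields the claimed inequality with a constant $\gamma=\gamma(\mathrm{C}_1,\mathrm{C}_2,n,K\times[t_1,t_2],\|f\|_{L^{q,r}(\Omega_T)})$.
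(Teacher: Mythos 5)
Your proposal is correct and follows exactly the standard Steklov-average testing argument (test function $\pm(u_h-k)_{\pm}\xi^p$ cut off in time, (P1) for the coercive term, (P2) plus Young's inequality with a small parameter for the cross term, and mixed-norm H\"{o}lder together with the parabolic embedding for the source, producing the level-set term with exponent $\kappa$), which is precisely the route the paper relies on: it states the proposition without a written proof and, for the companion Caccioppoli estimate, defers to the same computation in \cite[Proposition 3.1]{FS08} and \cite[Ch.2, \S 1]{D93}. No discrepancy to report.
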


Next Caccioppoli type estimate plays a decisive role in establishing the existence of an $(m, p)$-approximation for weak solutions of \eqref{1.1} under smallness conditions on the data.

\begin{proposition}[{\bf Caccioppoli type estimate}] Let $K \times [t_1, t_2] \subset \Omega \times (0, T]$. If $u$ is a weak solution of \eqref{1.1}, then there exists a constant $\mathrm{C}>0$, depending only on $n$, $p$ and $K\times [t_1, t_2]$ such that
{\tiny{
$$
\begin{array}{rcl}
 \displaystyle \sup\limits_{t \in (t_{1}, t_{2})} \int\limits_{K} u^{2}\xi^p dx + \int\limits_{t_{1}}^{t_{2}}\int\limits_{K} \Phi(|u|)|\nabla u|^{p}\xi^{p} dxdt  & \le &
\displaystyle \mathrm{C} \int\limits_{t_{1}}^{t_{2}}\int\limits_{K} u^{2}\xi^{p-1}\left|\frac{\partial\xi}{\partial {t}}\right|dxdt+\int\limits_{t_{1}}^{t_{2}}\int\limits_{K}\Phi(|u|)|u|^{p}|D\xi|^{p} dxdt \\
   & + & \displaystyle \mathrm{C}\lvert\rvert f \rvert\rvert_{L^{q,r}(\Omega_T)}^{2}.
\end{array}
$$}}
for every $\xi \in C^{\infty}_{0}(K\times (t_{1},t_{2});[0,1])$
\end{proposition}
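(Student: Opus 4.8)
The plan is to run the classical energy (testing) argument, performed at the level of the Steklov averages so that the time derivative is a genuine $L^{2}$ function. Fix $\xi\in C^{\infty}_{0}(K\times(t_1,t_2);[0,1])$ and $\tau\in(t_1,t_2)$. For $h>0$ small, the Steklov-averaged form of \eqref{1.1} reads
$$
\int_{K}\frac{\partial u_h}{\partial t}\,\psi\,dx+\int_{K}\big[\mathcal{A}(x,t,u,\nabla u)\big]_h\cdot\nabla\psi\,dx=\int_{K}f_h\,\psi\,dx
$$
for a.e.\ $t\in(t_1,t_2)$ and every admissible $\psi$. I would insert $\psi=u_h\,\xi^{p}$, integrate in time over $(t_1,\tau)$, estimate each of the three resulting terms, and let $h\to0^{+}$ only at the end, using the standard convergence properties of Steklov averages together with the local integrability $\Phi^{1/p}(|u|)|\nabla u|\in L^{p}_{\loc}(\Omega_T)$ encoded in Definition \ref{d2.1}.

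For the parabolic term, one uses $\frac{\partial u_h}{\partial t}\,u_h=\tfrac12\partial_t(u_h^{2})$ and integrates by parts in $t$: since $\xi$ vanishes near $t=t_1$, the lower-endpoint contribution drops and what survives is $\tfrac12\int_{K}u_h^{2}\xi^{p}\,dx\big|_{t=\tau}-\tfrac{p}{2}\int_{t_1}^{\tau}\!\int_{K}u_h^{2}\,\xi^{p-1}\partial_t\xi\,dx\,dt$, the last integral being dominated by the first term on the right-hand side of the claim. For the diffusion term, $\nabla\psi=\xi^{p}\nabla u_h+p\,u_h\,\xi^{p-1}\nabla\xi$; after passing to the limit, the leading piece $\int\!\int\mathcal{A}(x,t,u,\nabla u)\cdot\nabla u\,\xi^{p}$ is bounded below by $\mathrm{C}_1\int\!\int\Phi(|u|)|\nabla u|^{p}\xi^{p}$ by (P1), while the cross piece is controlled, via (P2), by $p\,\mathrm{C}_2\int\!\int\Phi(|u|)|\nabla u|^{p-1}|u|\,\xi^{p-1}|\nabla\xi|$. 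Factoring this integrand as $\big(\Phi(|u|)|\nabla u|^{p}\xi^{p}\big)^{(p-1)/p}\cdot\big(\Phi(|u|)^{1/p}|u|\,|\nabla\xi|\big)$ and applying Young's inequality with a small parameter $\varepsilon>0$ yields $\varepsilon\int\!\int\Phi(|u|)|\nabla u|^{p}\xi^{p}+C_{\varepsilon}\int\!\int\Phi(|u|)|u|^{p}|\nabla\xi|^{p}$; choosing $\varepsilon$ of the order of $\mathrm{C}_1$ lets the first summand be absorbed into the left-hand side.

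The remaining source term $\int_{t_1}^{\tau}\!\int_{K}f\,u\,\xi^{p}\,dx\,dt$ is where the compatibility hypothesis \eqref{w-cc} enters: by H\"older's inequality in the mixed-norm spaces $L^{q,r}$ paired with $L^{q',r'}$ and a parabolic Sobolev-type embedding for the energy class, this term is bounded by $\mathrm{C}\|f\|_{L^{q,r}(\Omega_T)}^{2}$ plus a small multiple of the already-controlled energy (which is then absorbed) and, if one prefers, a term $\tfrac14\int_{t_1}^{\tau}\!\int_{K}u^{2}\xi^{p}\,dx\,dt$ to be dealt with by Gr\"onwall's lemma. Collecting all the estimates gives, for every $\tau\in(t_1,t_2)$, an inequality of the form $\int_{K}u^{2}\xi^{p}\,dx\big|_{t=\tau}+\int_{t_1}^{\tau}\!\int_{K}\Phi(|u|)|\nabla u|^{p}\xi^{p}\,dx\,dt\le\mathcal{R}$, where $\mathcal{R}$ denotes the full right-hand side of the claimed estimate; taking the supremum over $\tau$ (and adjusting the constant) concludes the proof, with $\mathrm{C}$ depending only on $n$, $p$, the structural constants $\mathrm{C}_1,\mathrm{C}_2$ and $K\times[t_1,t_2]$. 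The two genuinely delicate points (and where I expect the main difficulty to lie) are the justification of the passage $h\to0^{+}$ in the Steklov scheme, which rests on the integrability in Definition \ref{d2.1} and the continuity of $\Phi$, and the handling of the source term in the mixed-norm framework; everything else is a routine application of (P1)-(P2) and Young's inequality.
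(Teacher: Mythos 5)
Your proposal is correct and takes essentially the same route as the paper: the paper's own proof is a two-line sketch that tests the Steklov-averaged equation with $\varphi=u_h\xi^{p}$, lets $h\to 0^{+}$, invokes Young's inequality, and defers the details to \cite[Proposition 3.1]{FS08}, which is precisely the scheme you carry out. The details you fill in (coercivity from (P1), the cross term via (P2) plus Young with absorption, and the mixed-norm treatment of the source term under the compatibility condition) are consistent with that reference, so the argument stands.
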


\begin{proof}
  Take $\varphi=u_{h}\xi^{p}$ as test function and using the equivalent definition of weak solution as in the previous Proposition \cite{p2.1} Then, we can finish the proof by letting $h \to 0$ and using Young's inequality. For a detailed proof we recommend the reader to see \cite[Proposition 3.1]{FS08}.
\end{proof}

In the sequel, we present the available interior regularity for doubly nonlinear degenerate PDEs as in \eqref{1.1}. Such estimates can be found in \cite{Iva89}, \cite{Iva91}, \cite{Iva95}, \cite{Iva97}, \cite{Iva98}, \cite{Iva00}, \cite{PV93} and \cite{Vesp92}.

\begin{theorem}\label{ThmHolderEst} Let $u$ be a locally bounded weak solution of \eqref{1.1}. Suppose that (P1)-(P4) and \eqref{w-cc} there hold. Then, $u$ is locally H\"{o}lder continuous in $\Omega_T$, and for every compact subset $\mathrm{K} \subset \Omega_T$, there exists constants $\gamma>1$ and $\alpha_0 \in (0, 1)$ such that
$$
|u(x_1, t_1)-u(x_2, t_2)| \leq \gamma \cdot \left(|x_1-x_2|^{\alpha_0}+ \|u\|_{L^{\infty}(\mathrm{K})}^{\frac{m-1}{p}}|t_1-t_2|^{\frac{\alpha_0}{p_m}} \right)
$$
for every $(x_1, t_1), (x_2, t_2) \in \mathrm{K}$.
\end{theorem}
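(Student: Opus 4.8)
The plan is to establish this through DiBenedetto's method of intrinsic scaling adapted to the doubly degenerate structure, along the lines of the cited works \cite{Iva95}, \cite{Iva00}, \cite{PV93}, \cite{Vesp92}; the crucial feature is that the modulus $\Phi(|u|)\sim|u|^{m-1}$ collapses both where $u$ vanishes and where $\nabla u$ vanishes, so the parabolic cylinders must be calibrated to the size of the solution. Concretely, I would fix $(x_0,t_0)$ interior to $\Omega_T$, normalize it to the origin, and let $\mu^{\pm}$ and $\omega\defeq\mu^{+}-\mu^{-}$ denote the essential supremum, infimum and oscillation of $u$ over a reference cylinder. The target is a geometric decay of the oscillation: universal constants $\lambda,\delta\in(0,1)$ and a nested family of cylinders $\{Q_k\}$, each carrying an \emph{intrinsic} time length that compensates the factor forced by $\Phi(|u|)\sim|u|^{m-1}$ (and, when $m>1$, the $|\nabla u|^{p-2}$ degeneracy), along which
$$\mathrm{osc}_{Q_{k+1}} u \le (1-\delta)\,\mathrm{osc}_{Q_k} u.$$
Iterating yields $\mathrm{osc}_{Q_k} u \le C\lambda^{k\alpha_0}$ for some $\alpha_0\in(0,1)$, and unwinding the intrinsic change of variables translates it into the asserted mixed H\"older modulus, with spatial exponent $\alpha_0$ and temporal exponent $\alpha_0/p_m$; the prefactor $\|u\|_{L^{\infty}}^{(m-1)/p}$ records that the intrinsic time scale is governed by $\Phi(|u|)\lesssim\|u\|_{L^{\infty}}^{m-1}$.

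The oscillation gain itself I would obtain through the standard analysis of alternatives. In the first alternative I feed the truncations $(u-k)_{\pm}$ into the Caccioppoli estimate of the previous Proposition and combine it with the parabolic Sobolev embedding on intrinsic cylinders to run a De Giorgi iteration: if the super-level set $\{u>\mu^{+}-\omega/2^{j}\}$ (or the symmetric sub-level set) occupies a sufficiently small measure fraction of an intrinsic cylinder, then $u$ is pushed strictly below $\mu^{+}-\omega/2^{j+1}$ on a smaller concentric cylinder. In the second alternative --- where, say, $\{u<\mu^{-}+\omega/2\}$ is large on some time slice --- I would use a logarithmic energy estimate together with an expansion-of-positivity argument to propagate this largeness forward over a whole intrinsic cylinder, which returns the situation to the first alternative; matching the measure thresholds of the two alternatives fixes $\delta$. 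The inhomogeneity $f\in L^{q,r}$ enters perturbatively at each scale, and by H\"older's inequality in the mixed-norm space together with the compatibility condition \eqref{w-cc} its contribution decays in $k$ strictly faster than $\lambda^{k\alpha_0}$ (after possibly decreasing $\alpha_0$), hence does not spoil the decay. Collecting the resulting bound over all interior compact sets and all scales gives the stated estimate with $\gamma$ depending only on the structural data.

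The hardest part, I expect, is the interplay of the two degeneracies in the choice of the intrinsic geometry, and specifically the behaviour of the oscillation near the set $\{u=0\}$. Away from it, where $|u|\sim\omega$, the factor $\Phi(|u|)$ is comparable to $\omega^{m-1}$ and the natural cylinders are the usual $p$-intrinsic ones corrected by a power of $\omega$; near $\{u=0\}$, however, $\Phi$ degenerates further and one must pass to a separate, more singular family of cylinders and reconcile the two regimes so that both alternatives still close. It is precisely this reconciliation --- vacuous when $m=1$, since then $\Phi$ is trapped between positive constants and is nondegenerate in $u$ --- that produces the dichotomy $p_m=2$ for $m=1$ versus $p_m=p$ for $m>1$ in the temporal exponent.
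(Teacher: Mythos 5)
Your proposal is essentially the same argument the paper relies on: the paper offers no proof of this theorem at all, quoting it as known interior regularity from \cite{Iva89}, \cite{Iva91}, \cite{Iva95}, \cite{Iva97}, \cite{Iva00}, \cite{PV93} and \cite{Vesp92}, and those references prove it precisely by the DiBenedetto-type intrinsic-scaling scheme you outline (oscillation-calibrated cylinders, the two alternatives via Caccioppoli/De Giorgi iteration and logarithmic estimates with expansion of positivity, and a perturbative treatment of $f\in L^{q,r}$ under \eqref{w-cc}). So the approach is the right one and matches the source of the paper's statement; the technical execution you defer (the reconciliation of the two degeneracy regimes and the resulting $p_m$ dichotomy) is exactly what is carried out in those cited works.
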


Next, let us remember the optimal regularity estimates for evolutionary $p-$Laplacian type equations as follows
\begin{equation}\label{Eqp-Lapla}
  \mathcal{Q}_{p} u \defeq  \frac{\partial u}{\partial t}-\Div(\mathfrak{a}(x, t)\lvert \nabla u \rvert^{p-2}\nabla u) = f(x, t) \quad \text{in} \quad \Omega_T \quad \text{for} \quad p \ge 2,
\end{equation}
whose coefficients are continuous and they fulfill
\begin{equation}\label{ContCondp_laplace}
0 \le \mathrm{L}_0 \le \mathfrak{a}(x, t) \le \mathrm{L}_1< \infty \quad \text{in} \quad \Omega_T.
\end{equation}
Moreover, $f \in L^{q,r}(\Omega_T)$ fulfils the compatibility conditions \eqref{w-cc}. In this direction, we present sharp regularity estimates for the class of equations \eqref{Eqp-Lapla}.

\begin{theorem}\label{ThmSharp-p-Laplace} Let $u$ be a bounded weak solution of \eqref{Eqp-Lapla}. Suppose further that the assumptions \eqref{w-cc} and \eqref{ContCondp_laplace} are in force. Then, $u$ is locally $C^{0, \hat{\alpha}}$ in space variable and $C^{0, \frac{\hat{\alpha}}{\hat{\theta}}}$ in time variable,  where
$$
\hat{\alpha} \defeq \frac{(pq-n)r-pq}{q[(p-1)r-(p-2)]} \qquad \text{and} \qquad \hat{\theta} \defeq 2\hat{\alpha} + (1-\hat{\alpha})p.
$$
\end{theorem}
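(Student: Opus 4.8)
\textbf{Proof proposal for Theorem \ref{ThmSharp-p-Laplace}.} The plan is to follow the same intrinsic-scaling and geometric-tangential scheme that underpins Theorem \ref{t1.2}, but specialized to the case in which the modulus of ellipticity no longer degenerates in $u$ (here $\Phi \equiv \mathfrak{a}(x,t)$, with $\mathrm{L}_0 \le \mathfrak{a} \le \mathrm{L}_1$). Formally this is the limiting situation $m=1$ of \eqref{1.1}, so that $p_m = 2$, $\alpha_{\mathrm{Hom}} = 1$ (the homogeneous $p$-Laplacian evolution equation with constant coefficients enjoys locally Lipschitz-in-space, $C^{0,1/p}$-in-time interior estimates), and the abstract exponent $\alpha$ in \eqref{Sharp_alpha} collapses exactly to $\hat\alpha = \frac{(pq-n)r-pq}{q[(p-1)r-(p-2)]}$, with the intrinsic scaling factor $\theta = p - \hat\alpha(p-2) = 2\hat\alpha + (1-\hat\alpha)p = \hat\theta$. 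So the statement should be read as the instantiation of Theorem \ref{t1.2} at $m=1$, and the task is to verify that every ingredient of that proof survives (indeed simplifies) under \eqref{ContCondp_laplace} and \eqref{w-cc}.

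The key steps, in order, would be: (i) \emph{Normalization and smallness.} After a standard rescaling $u \mapsto u/(\|u\|_{L^\infty} + \|f\|_{L^{q,r}} + 1)$ we may assume $\|u\|_{L^\infty(Q^-_1)} \le 1$ and $\|f\|_{L^{q,r}} \le \delta$ for a universal $\delta$ to be fixed by the compactness step; the condition \eqref{w-cc} is exactly what makes the Caccioppoli estimate (Proposition analogous to the one above, with $\Phi$ bounded) yield uniform control of $\Phi^{1/p}(|u|)|\nabla u| = \mathfrak{a}^{1/p}|\nabla u|$ in $L^p_{\loc}$. (ii) \emph{Compactness / approximation lemma.} Prove the $p$-Laplacian analogue of Lemma \ref{l2.1}: if $\|f\|_{L^{q,r}}$ is small and the oscillation of $\mathfrak{a}$ is small in a fixed cylinder, then $u$ is $C^0$-close, in $Q^-_{1/2}$, to a bounded weak solution $\mathfrak{h}$ of the homogeneous frozen-coefficient equation $\partial_t \mathfrak{h} - \Div(\mathfrak{a}_0 |\nabla \mathfrak{h}|^{p-2}\nabla \mathfrak{h}) = 0$; here one argues by contradiction using the Caccioppoli bounds, Aubin–Lions compactness, and stability of weak solutions. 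Since $\mathfrak{a}_0$ is a constant and the equation is homogeneous, $\mathfrak{h}$ is locally Lipschitz in space and $C^{0,1/p}$ in time with universal constants (this is the only place the regularity theory from the references enters, and it is much softer here than the doubly-degenerate Barenblatt discussion). (iii) \emph{One-step geometric improvement.} Using (ii) together with the interior estimate for $\mathfrak{h}$, deduce that there exist universal $0<\lambda<1$ and $C_\ast$ such that $\operatorname{osc}_{Q^-_\lambda} (u - u(0,0)) \le \lambda^{\hat\alpha}$, choosing $\hat\alpha$ so that $\lambda^{\hat\alpha}$ beats both the Lipschitz contribution of $\mathfrak h$ (exponent $1 > \hat\alpha$) and the tail coming from $f$, whose scaling exponent is precisely $p\big[1 - (\frac{n}{pq}+\frac1r)\big]/\big[\text{denominator}\big]$ — this is the computation that forces the formula for $\hat\alpha$. (iv) \emph{Iteration in intrinsic cylinders.} Iterate the improvement on the geometrically shrinking intrinsic cylinders $Q^-_{\lambda^k}$; because $m=1$ the operator \emph{is} translation invariant under additive constants (unlike the doubly degenerate case, cf.\ the discussion around $u_k$ in the excerpt), so one may genuinely subtract the sequence of constants $\mathfrak{c}_k$ and close a Campanato-type iteration, obtaining $\sup_{Q^-_{\lambda^k}}|u - \mathfrak{c}_k| \le \lambda^{k\hat\alpha}$. (v) \emph{Campanato characterization.} Translate the geometric decay into the parabolic $C^{\hat\alpha,\hat\alpha/\hat\theta}$ seminorm bound at $(0,0)$, then cover $Q^-_{1/2}$ and undo the normalization to recover the stated estimate with a universal constant.

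The main obstacle is step (ii), the $(1,p)$-approximation lemma with variable, merely continuous coefficients: one must produce uniform-in-the-approximating-sequence energy estimates and strong $L^p$ compactness of the gradients, despite the $p$-Laplacian nonlinearity, and then pass to the limit in the weak formulation to identify the limit as a solution of the frozen-coefficient problem — the standard monotonicity/Minty argument works but requires care because the time-derivative terms are controlled only through Steklov averaging and the mixed-norm integrability \eqref{w-cc}. A secondary technical point is checking that the intrinsic cylinders $Q^-_{\lambda^k}$ with exponent $\hat\theta$ are genuinely nested under the rescaling $u_k(x,t) = \lambda^{-k\hat\alpha}\big(u(\lambda^k x, \lambda^{k\hat\theta} t) - \mathfrak{c}_k\big)$ and that each $u_k$ solves an equation of the same class with the same structural constants and a source whose $L^{q,r}$ norm does not grow — this is a routine but essential bookkeeping verification, and it is exactly where the identity $\hat\theta = 2\hat\alpha + (1-\hat\alpha)p$ is needed so that the parabolic scaling matches the algebraic degree of the equation.
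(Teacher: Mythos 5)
First, a point of comparison: the paper does not prove Theorem \ref{ThmSharp-p-Laplace} at all — it is imported from \cite[Theorem 3.4]{TU14} (frozen coefficients), with the remark that the continuous-coefficient case under \eqref{ContCondp_laplace} follows as in \cite[Sec. 4]{TU14}. Your proposal reconstructs precisely that geometric-tangential/intrinsic-scaling proof, and your bookkeeping is correct: at $m=1$ the exponent in \eqref{Sharp_alpha} collapses to $\hat{\alpha}$ (since \eqref{w-cc} forces $\hat{\alpha}<1$), and $\hat{\theta}=2\hat{\alpha}+(1-\hat{\alpha})p$ is exactly the exponent making the rescaling $u\mapsto\lambda^{-\hat{\alpha}}\bigl(u(\lambda x,\lambda^{\hat{\theta}}t)-\mathfrak{c}\bigr)$ preserve the $p$-parabolic structure while the source picks up the factor that produces the formula for $\hat{\alpha}$. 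One caution on framing: this theorem is invoked inside the paper's proof of Theorem \ref{t1.2} (Case 2, away from the zero set), so it cannot literally be ``read as the instantiation of Theorem \ref{t1.2} at $m=1$'' — that would be circular. Your actual plan (redoing the scheme directly for the non-degenerate-in-$u$ equation) avoids this, but the proof must be presented as independent of Theorem \ref{t1.2}.

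There is one genuine gap in step (ii)--(iii) as written: you endow the homogeneous frozen-coefficient profile $\mathfrak{h}$ with only $C^{0,1/p}$ regularity in time. With that exponent, the time contribution in the one-step improvement on the intrinsic cylinder $Q^{-}_{\lambda}$ is of order $\lambda^{\hat{\theta}/p}$, and demanding $\lambda^{\hat{\theta}/p}\lesssim\lambda^{\hat{\alpha}}$ forces $\hat{\alpha}\le \frac{p}{2(p-1)}$; this fails whenever $\frac{2}{r}+\frac{n}{q}$ is close to $1$, i.e. exactly when $\hat{\alpha}$ is close to $1$, so the iteration does not close on the full range claimed by the theorem. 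The correct ingredient — and the reason the paper sets $p_m=2$ when $m=1$ and why $\hat{\theta}\ge 2$ — is that bounded $p$-caloric functions, having locally bounded spatial gradient, are locally $C^{0,1/2}$ in time; with that estimate the time term is of order $\lambda^{\hat{\theta}/2}\le\lambda$, which beats $\lambda^{\hat{\alpha}}$ for every $\hat{\alpha}<1$, and the rest of your scheme (constant subtraction, which is legitimate here by translation invariance, followed by the Campanato-type iteration) goes through for the sharp exponent.
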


The proof of Theorem \ref{ThmSharp-p-Laplace} was addressed in \cite[Theorem 3.4]{TU14} for equations with frozen coefficients. Furthermore, a similar result for the setting with continuous variable coefficients as in \eqref{Eqp-Lapla} (under the assumption \eqref{ContCondp_laplace}) can be obtained, see e.g. \cite[Sec. 4]{TU14}.

In the sequel, we present a key step which allow us to access the tangential path toward the regularity theory available for ``frozen'' coefficient, homogeneous $(m, p)-$caloric functions. As a matter of fact, such a result is a compactness devise that states that if the source term $f$ and $\Theta_{\mathcal{A}}$ have respectively a small norm in $L^{q, r}$ and oscillates in a small fashion, then a weak solution to \eqref{1.1} is close to an $(m, p)-$Laplacian profile in an inner sub-domain. The proof is based on a \textit{Reductio ad absurdum} argument and makes use of compactness driven from a Caccioppoli type estimate.

\begin{lemma}[{\bf $(m, p)-$Approximation}]\label{l2.1}
If $u$ is a weak solution of \eqref{1.1} in $Q^{-}_1$ with
$\|u\|_{L^{\infty}(Q^{-}_1)} \leq 1$, then $\forall\varepsilon>0$ there exists $\delta= \delta(p,n,m,\varepsilon)>0$ such that whenever
$$
\displaystyle \max\left\{\|f\|_{L^{q, r}(Q^{-}_1)},  \sup_{0< \rho \leq \rho_0} \sup_{(y, \tau),(x,t) \in Q_{\rho}} \Theta_{\mathcal{A}}(y, \tau, x, t)\right\}\leq\delta_{\varepsilon}
$$
there exists an $(m, p)$-caloric function $\phi: Q^{-}_{\frac{1}{2}} \to \R$, i.e.,
\begin{equation}\label{EqHomProb}
      \frac{\partial \phi}{\partial t}-\Div(\mathcal{A}(\phi, \nabla \phi)) = 0 \quad \text{in} \quad Q^{-}_{\frac{1}{2}},
\end{equation}
with $\mathcal{A}$ satisfying (P1)-(P3) with $\mathrm{C}_1 = \mathrm{C}_2, \gamma_1=\gamma_2, \psi_2=\psi_2$ and $\omega \equiv 0$ such that
\begin{equation}\label{3.1}
\|u-\phi\|_{L^{\infty}\left(Q^{-}_{\frac{1}{2}}\right)} <\varepsilon.
\end{equation}
\end{lemma}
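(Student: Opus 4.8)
The plan is to argue by contradiction (a \emph{Reductio ad absurdum} / compactness argument), which is the standard route for this kind of "closeness to the homogeneous profile" statement. Suppose the conclusion fails: then there exist $\varepsilon_0 > 0$ and sequences of weak solutions $u_j$ of $\mathcal{Q}_{\mathcal{A}_j} u_j = f_j$ in $Q^-_1$ with $\|u_j\|_{L^\infty(Q^-_1)} \le 1$, with structural data $\mathcal{A}_j$ satisfying (P1)--(P3) with the \emph{same} constants $\mathrm{C}_1, \mathrm{C}_2, \gamma_1, \gamma_2, \psi_1, \psi_2, \sigma_0$, such that
\[
  \max\left\{\|f_j\|_{L^{q,r}(Q^-_1)}, \ \sup_{0<\rho\le\rho_0}\ \sup_{(y,\tau),(x,t)\in Q_\rho}\Theta_{\mathcal{A}_j}(y,\tau,x,t)\right\} \le \frac{1}{j},
\]
yet $\|u_j - \phi\|_{L^\infty(Q^-_{1/2})} \ge \varepsilon_0$ for \emph{every} $(m,p)$-caloric function $\phi$ on $Q^-_{1/2}$.

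The key steps, in order, would be: First, extract uniform estimates. Since $\|u_j\|_{L^\infty} \le 1$, the Caccioppoli type estimate (the Proposition above, with $k=0$ and a fixed cutoff $\xi$ supported in, say, $Q^-_{3/4}$ and equal to $1$ on $Q^-_{1/2}$) gives a bound on $\int\!\!\int \Phi(|u_j|)|\nabla u_j|^p\,\xi^p$ and on $\sup_t \int u_j^2 \xi^p$ that is uniform in $j$ — here one uses $\|f_j\|_{L^{q,r}} \le 1$ to control the source term. This yields a uniform bound for $\Phi^{1/p}(|u_j|)|\nabla u_j|$ in $L^p_{\loc}$, and via the equation a uniform bound for $\partial_t u_j$ in a suitable negative Sobolev space $L^{p'}(0,T; W^{-1,p'})$. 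Second, apply compactness: by the Aubin--Lions--Simon lemma (or the parabolic compactness machinery in \cite{D93}, or the equicontinuity furnished by Theorem \ref{ThmHolderEst} applied uniformly on compact subsets), up to a subsequence $u_j \to u_\infty$ uniformly on $\overline{Q^-_{1/2}}$ (or at least on compact subsets, which suffices after a slight shrinking of radii), and weakly enough on the gradients. Third, pass to the limit in the weak formulation. By (P2)/(P3) the vector fields $\mathcal{A}_j(x,t,u_j,\nabla u_j)$ converge (using that $\Theta_{\mathcal{A}_j} \to 0$ forces $\mathcal{A}_j$ to converge to a spatially/temporally frozen field $\mathcal{A}_\infty(s,\xi)$, and using monotonicity of the limiting operator together with the standard Minty-type / energy-convergence argument to identify the weak limit of $\mathcal{A}_j(\cdot,u_j,\nabla u_j)$ with $\mathcal{A}_\infty(u_\infty,\nabla u_\infty)$), while $f_j \to 0$ in $L^{q,r}$. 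Hence $u_\infty$ is an $(m,p)$-caloric function on $Q^-_{1/2}$, i.e. a solution of \eqref{EqHomProb} with $\mathcal{A}_\infty$ obeying (P1)--(P3) with collapsed constants and $\omega \equiv 0$. Fourth, take $\phi = u_\infty$ in the contradiction hypothesis: $\|u_j - u_\infty\|_{L^\infty(Q^-_{1/2})} \ge \varepsilon_0$ for all $j$, contradicting the uniform convergence $u_j \to u_\infty$.

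I expect the \textbf{main obstacle} to be the limit-passage step for the nonlinear flux, because of the \emph{doubly} degenerate structure: the modulus of ellipticity $\Phi(|u_j|)|\nabla u_j|^{p-2}$ degenerates both at $\{u_j = 0\}$ and at $\{|\nabla u_j| = 0\}$, so one does not have uniform ellipticity and cannot directly invoke monotone-operator theory. One remedy is to work with the "truncated" vector fields and exploit the monotonicity inherited from (P1), combined with strong $L^p$ convergence of $u_j$ (from compactness) to pass $\Phi(|u_j|) \to \Phi(|u_\infty|)$ a.e. and then run the Browder--Minty argument on $\mathcal{A}_j(s,\xi) \rightharpoonup \mathcal{A}_\infty(s,\xi)$ with the energy identity $\int\!\!\int \mathcal{A}_j(u_j,\nabla u_j)\cdot\nabla u_j \to \int\!\!\int \mathcal{A}_\infty(u_\infty,\nabla u_\infty)\cdot\nabla u_\infty$; the latter follows by testing the equations with $u_j$ (via Steklov averages, to make the time term rigorous) and using $f_j \to 0$. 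A secondary technical point is that $u_\infty$ must be shown to satisfy \eqref{EqHomProb} \emph{with the same structural constants}, which is automatic here because all $\mathcal{A}_j$ share those constants and the constants are preserved under the (pointwise) limit. The rest — the Caccioppoli bound, Aubin--Lions, and the final contradiction — is routine.
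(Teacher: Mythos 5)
Your proposal is correct and follows essentially the same route as the paper: a \emph{reductio ad absurdum}/compactness argument that uses the Caccioppoli estimate for uniform energy bounds, the equicontinuity coming from the interior H\"{o}lder estimates (Theorem \ref{ThmHolderEst}) together with Arzel\`{a}--Ascoli to get $u_j \to u_\infty$ uniformly on $Q^{-}_{1/2}$, a passage to the limit in the equation showing $u_\infty$ is $(m,p)$-caloric, and finally the choice $\phi = u_\infty$ to contradict the assumed $\varepsilon_0$-separation. The only real difference is in how the limiting flux is identified: the paper introduces $v_j = \mathrm{F}(|u_j|)$ with $\mathrm{F}$ a primitive of $\Phi^{1/p}$ so that $\nabla v_j$ is bounded, hence weakly convergent, in $L^p$, whereas you propose Aubin--Lions plus a Minty-type monotonicity argument; your treatment of this step is, if anything, more detailed than the paper's rather terse limit passage.
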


\begin{proof} Suppose for the sake contradiction that the statement of the Lemma does not hold. Then, for some $\delta_{0}>0$, there would exist sequences
$$
(u_{j})_{j} \in C^0_{loc}(0,-1;L^{2}_{loc}(\Omega)), \quad \text{with} \quad  \lvert u_{j} \rvert \Phi(|u_{j}|)^{\frac{1}{p}} \in L^{p}_{loc}(0,-1;W^{1,p}(B_{1}))
$$
and $(\mathcal{A}_j)_j$ and $(f_{j})_{j} \in L^{q,r}(Q^{-}_{1})$ satisfying (P1)-(P4), and linked through
\begin{equation}\label{Eq2.9}
    \frac{\partial u_{j}}{\partial t}-\Div{\mathcal{A}_j(x,t,u_{j}, \nabla u_{j})} = f_{j}(x, t) \quad \text{in} \quad Q^{-}_{1}
\end{equation}
 with
\begin{equation}\label{Eq2.10}
\lvert\lvert u_{j}\rvert\rvert_{L^{\infty}(Q^{-}_{1})} \leq 1 \qquad \text{and} \qquad \max\left\{\lvert \lvert f_{j} \rvert\rvert_{L^{q,r}(Q^{-}_{1})},  \,\,\Theta_{\mathcal{A}_j}(y, \tau, x, t) \right\} \leq\frac{1}{j}
\end{equation}
for all $j$. However, for any weak solution $\phi$ of \eqref{EqHomProb} we have
\begin{equation}\label{Eq2.11}
\lvert\lvert u_{j}-\phi \rvert \rvert_{L^{\infty}\left(Q^{-}_{\frac{1}{2}}\right)} >\delta_{0} \quad \text{for all} \quad j.
\end{equation}

Now, consider a cutoff function $\xi \in C^{\infty}_{0}(Q^{-}_{1})$, such that $0\le \xi \le 1$, with $\xi = 1$ in $Q_{\frac{1}{2}}$ and $\xi = 0$ in $\partial_{p} Q^{-}_{1}$. Thus, since $u_{j}$ is a solution of equation \eqref{Eq2.9}, the Cacciopolli estimate (Proposition \ref{p2.1}) ensures, with the help of \eqref{Eq2.10}, that
\begin{equation*}
    \sup\limits_{-1<t<0} \int\limits_{B_{1}} u_j^{2}\xi^p dx + \int\limits_{-1}^{0}\int\limits_{B_{1}} \Phi(|u_j|)|\nabla u_j|^{p}\xi^{p} dxdt \leq \bar{\mathrm{C}}.
\end{equation*}

Now, let $v_{j}=\mathrm{F}(|u_{j}|)$, where $\mathrm{F}$ is a primitive of $\Phi^{\frac{1}{p}}$, then
$$
|\nabla v_{j}|^{p}=\Phi(|u_{j}|)|\nabla u_{j}|^{p}.
$$

Hence,
$$
\lvert\rvert \nabla v_{j} \lvert\lvert_{L^p \left(Q^{-}_{\frac{1}{2}}\right)}^{p} = \int\limits_{-2^{\theta}}^{0}\int\limits_{B_{1}}\lvert \nabla v_{j}\rvert \rvert^{p}dxdt \leq \int\limits_{-1}^{0}\int\limits_{B_{1}}\Phi(|u_{j}|)\lvert \nabla u_{j}\rvert \rvert^{p}\xi^{p}dxdt \leq \bar{\mathrm{C}}
$$

Therefore, up to subsequence,
\begin{equation}\label{Eq2.12}
\nabla v_{j} \rightharpoonup \Xi \quad \text{weakly in} \quad L^{p}\left(Q^{-}_{\frac{1}{2}}\right).
\end{equation}

Moreover, from Theorem \ref{ThmHolderEst}, the sequence $(u_{j})_j$ is equi-continuous, then by Arzel\`{a}-Ascoli compactness criterium, up to a subsequence,
\begin{equation}\label{Eq2.13}
u_{j} \to u_{\infty} \qquad \text{uniformly in}  \qquad Q^{-}_{\frac{1}{2}}.
\end{equation}
In this point, we can to identity by means of \eqref{Eq2.12} and \eqref{Eq2.13} and the uniqueness of weak limits $\Xi= \nabla v$, since we have the point-wise convergence
$$
v_{j}=\mathrm{F}(|u_{j}|) \to \mathrm{F}(u_{\infty}) \defeq v.
$$

Finally, by passing the limit in \eqref{Eq2.9}, we conclude that $v$ satisfies \eqref{EqHomProb}, which yields a contradiction with \eqref{Eq2.11} for $j \gg 1$. This completes the proof.

\end{proof}

\begin{remark}[{\bf Normalization and ``smallness regime''}]\label{r2.1}
Note that if $u$ is any weak solution of \eqref{1.1} in $Q_1$, then it is possible to normalize it in such a way, that the normalized function satisfies conditions of Lemma \ref{l2.1}. More precisely, for a $\delta>0$ and $s>0$ fixed, there exists positive constant $\mu=\mu(\delta, s, \|u\|_{L^{\infty}}, \|f\|_{L^{q, r}})$ such that the function
$$
v_{\mu_0}(x,t)\defeq \mu_0^{s}u(\mu_0^{s}x,\mu_0^{\tau}t),
$$
fulfils the assumption of Lemma \ref{l2.1}, where $\tau:=s(m-1)+2s(p-1)>0$,
$$
0<\mu_0 \le \min\left\{ 1, \frac{1}{ \sqrt[s]{\|u\|_{L^{\infty}(Q^{-}_1)}}},\, \sqrt[\kappa_0]{\frac{\delta}{\|f\|_{L^{q, r}(Q^{-}_1)}+1}},\sqrt[\Pi_0]{\frac{\delta}{\omega^{-1}_{\mathcal{A}}(\frac{\delta}{\mathrm{C_{\mathcal{A}}}+1})}} \right\},
$$
$$
\begin{array}{rcl}
  \kappa_0 & \defeq & s[2(p-1)+m]-\left(\frac{sn}{q}+\frac{(2p-1)s+s(m-1)}{r}\right) \\
   & = & s(m+p-2)\left[\left(1-\frac{1}{r}\right)+\frac{1}{r(m+p-2)}\right] + sp\left[1-\left(\frac{n}{pq}+\frac{1}{r}\right)\right]
\end{array}
$$
is a positive constant due to integrability condition from \eqref{w-cc} and
$$
\Pi_0 \defeq s(m-1)+2s(p-1)=s[2p+m-3]>0, \quad \text{since} \quad m\ge1 \quad \text{and} \quad p\geq 2.
$$
\end{remark}

\section{Sharp geometric  estimates via an iterative scheme}\label{Sec3}

Using an iterative scheme, we will prove the desired regularity estimate.
The estimate in Lemma \ref{l2.1} can be further improved up to the sharp exponent in our compatibility regime \eqref{w-cc} providing a precise control of oscillation at interior points with controlled magnitude. The following lemma serves such a purpose and it provides the first step of such an iteration.

\begin{lemma}\label{1stStepInduc} There exist $\varepsilon > 0$ and $\lambda \in \left(0,  \frac{1}{4}\right]$ both depending only universal parameters $m,n,p$ and $\alpha$, such that if
$$
\max\left\{\lvert\lvert f\rvert \rvert_{L^{q,r}(Q_{1})}, \,\, \Theta_{\mathcal{A}}(y, \tau, x, t)\right\}<\varepsilon
$$
and $u$ is a weak solution of \eqref{1.1} in $Q^{-}_{1}$, with $\lvert\lvert u\rvert\rvert_{L^{\infty}\left(Q^{-}_{1}\right)}\leq 1$,
then
$$
\lvert\lvert u\rvert\rvert_{L^{\infty}\left(Q^{-}_{\lambda}\right)}\leq \lambda^{\alpha} \qquad  \text{provided} \qquad |u(0,0)|\leq \frac{\lambda^{\alpha}}{4}.
$$
\end{lemma}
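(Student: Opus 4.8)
The plan is to argue by contradiction using the $(m,p)$-approximation from Lemma~\ref{l2.1} together with the known regularity for the homogeneous $(m,p)$-caloric profile from Theorem~\ref{ThmHolderEst}. First, I fix $\varepsilon > 0$ to be chosen later (it will be calibrated against the H\"older norm of the limiting profile), and let $\delta_\varepsilon$ be the modulus given by Lemma~\ref{l2.1}. Assuming the smallness hypothesis $\max\{\|f\|_{L^{q,r}(Q^-_1)}, \sup \Theta_{\mathcal{A}}\} < \varepsilon \le \delta_\varepsilon$, there exists an $(m,p)$-caloric function $\phi$ on $Q^-_{1/2}$ with $\|u-\phi\|_{L^\infty(Q^-_{1/2})} < \varepsilon$. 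Since $\|u\|_{L^\infty(Q^-_1)} \le 1$ we get $\|\phi\|_{L^\infty(Q^-_{1/2})} \le 2$, so by Theorem~\ref{ThmHolderEst} (applied on a compact subset of $Q^-_{1/2}$, with the frozen-coefficient constants) $\phi$ is $C^{\alpha_{\mathrm{Hom}}, \alpha_{\mathrm{Hom}}/p_m}$ with a universal bound $\mathrm{C}_{\mathrm{Hom}}$, and in particular, recalling the intrinsic scaling $\theta = p - \alpha(m+p-3)$ and the fact that $\alpha < \mathrm{M}_\sharp < \alpha_{\mathrm{Hom}}$ while $\alpha/\theta < \alpha_{\mathrm{Hom}}/p_m$ (this is exactly where the definition of $\mathrm{M}_\sharp$ and $p_m$ enters), we have for $0 < \lambda \le 1/4$ the parabolic oscillation bound $\sup_{Q^-_\lambda} |\phi(x,t) - \phi(0,0)| \le \mathrm{C}_{\mathrm{Hom}} \lambda^{\alpha}$, where I have absorbed the $\|\phi\|_{L^\infty}^{(m-1)/p}$ factor into the universal constant.

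\textbf{Choice of parameters and the geometric decay.} With this in hand, estimate for $(x,t) \in Q^-_\lambda$,
\begin{equation*}
|u(x,t)| \le |u(x,t) - \phi(x,t)| + |\phi(x,t) - \phi(0,0)| + |\phi(0,0) - u(0,0)| + |u(0,0)| \le 2\varepsilon + \mathrm{C}_{\mathrm{Hom}}\lambda^{\alpha} + \frac{\lambda^\alpha}{4}.
\end{equation*}
Now I first choose $\lambda \in (0, 1/4]$ small enough that $\mathrm{C}_{\mathrm{Hom}} \lambda^{\alpha} \le \frac{1}{4}\lambda^{\alpha}$, i.e. $\lambda \le \min\{1/4, (4\mathrm{C}_{\mathrm{Hom}})^{-1/\alpha}\}$; this fixes $\lambda$ in terms of $m,n,p,\alpha$ only. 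Then I choose $\varepsilon > 0$ small enough that $2\varepsilon \le \frac{1}{4}\lambda^{\alpha}$, which fixes $\varepsilon = \varepsilon(m,n,p,\alpha)$. Combining, $|u(x,t)| \le \frac{1}{4}\lambda^\alpha + \frac{1}{4}\lambda^\alpha + \frac{1}{4}\lambda^\alpha \le \lambda^\alpha$ on $Q^-_\lambda$, which is the claim. (One must also re-examine Lemma~\ref{l2.1} to confirm $\delta_\varepsilon$ can be taken $\ge \varepsilon$ after shrinking, or simply replace the hypothesis constant by $\min\{\varepsilon, \delta_\varepsilon\}$; this is a cosmetic adjustment since all constants are universal.)

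\textbf{Main obstacle.} The delicate point is \emph{not} the compactness argument itself — that is handled by Lemma~\ref{l2.1} — but rather verifying that the exponent pair $(\alpha, \alpha/\theta)$ is genuinely dominated by the homogeneous regularity pair $(\alpha_{\mathrm{Hom}}, \alpha_{\mathrm{Hom}}/p_m)$ \emph{in the intrinsic parabolic metric} $|(x,t)|_\theta := \max\{|x|, |t|^{1/\theta}\}$. Concretely, on $Q^-_\lambda = B_\lambda \times (-\lambda^\theta, 0]$ the homogeneous bound reads $|\phi(x,t)-\phi(0,0)| \lesssim \lambda^{\alpha_{\mathrm{Hom}}} + \lambda^{\theta \alpha_{\mathrm{Hom}}/p_m}$, and for this to be $\lesssim \lambda^{\alpha}$ with $\lambda \le 1/4$ one needs simultaneously $\alpha \le \alpha_{\mathrm{Hom}}$ and $\alpha \le \theta\,\alpha_{\mathrm{Hom}}/p_m$. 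The second inequality, upon substituting $\theta = p - \alpha(m+p-3)$ and solving for $\alpha$, is precisely equivalent to $\alpha \le \frac{\alpha^{-}_{\mathrm{Hom}}\,p}{p_m + \alpha_{\mathrm{Hom}}(m+p-3)}$ — the first term inside $\mathrm{M}_\sharp$ — while a slightly more generous use of the time-regularity (exploiting that $\alpha/\theta$ compares to $\alpha_{\mathrm{Hom}}/p_m$ through a different algebraic route, giving the factor $2(p-1)$) yields the second term $\frac{2\alpha^{-}_{\mathrm{Hom}}(p-1)}{p_m(m+p-2)}$. Thus the \emph{maximum} of the two expressions defining $\mathrm{M}_\sharp$ is exactly the threshold up to which the geometric iteration closes, and the bulk of the care in the proof goes into this algebraic bookkeeping rather than into soft analysis.
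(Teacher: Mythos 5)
Your overall strategy coincides with the paper's: approximate $u$ by an $(m,p)$-caloric profile $\phi$ via Lemma \ref{l2.1}, transfer the homogeneous $C^{\alpha_{\mathrm{Hom}},\frac{\alpha_{\mathrm{Hom}}}{p_m}}$ estimate to the intrinsic cylinder $Q^{-}_{\lambda}$, and conclude by the triangle inequality after fixing first $\lambda$ and then the smallness parameter. However, the decisive quantitative step in your ``Choice of parameters'' paragraph is wrong as written. You state the oscillation bound as $\sup_{Q^{-}_{\lambda}}|\phi-\phi(0,0)|\le \mathrm{C}_{\mathrm{Hom}}\lambda^{\alpha}$ and then claim that $\mathrm{C}_{\mathrm{Hom}}\lambda^{\alpha}\le \tfrac{1}{4}\lambda^{\alpha}$ can be arranged by taking $\lambda\le(4\mathrm{C}_{\mathrm{Hom}})^{-1/\alpha}$. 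This is vacuous: both sides scale like $\lambda^{\alpha}$, so no choice of $\lambda$ helps unless $\mathrm{C}_{\mathrm{Hom}}\le\tfrac14$; your proposed choice only yields $\mathrm{C}_{\mathrm{Hom}}\lambda^{\alpha}\le\tfrac14$, a constant, which does not give the decay $\|u\|_{L^{\infty}(Q^{-}_{\lambda})}\le\lambda^{\alpha}$. The actual mechanism (and the content of the paper's proof) is that on $Q^{-}_{\lambda}=B_{\lambda}\times(-\lambda^{\theta},0]$ the homogeneous profile oscillates with a \emph{strictly better} exponent than $\alpha$: one bounds
$$
|\phi(x,t)-\phi(0,0)|\le \mathrm{k}_{1}\lambda^{\alpha_{\mathrm{Hom}}}+\mathrm{k}_{2}\lambda^{\frac{\theta\alpha_{\mathrm{Hom}}}{p_m}}\le \mathrm{C}\lambda^{\beta},
$$
with $\beta=\frac{\theta\alpha_{\mathrm{Hom}}}{p_m}$ or, using $\theta\ge \frac{2(p-1)}{p+m-2}$, $\beta=\frac{2\alpha_{\mathrm{Hom}}(p-1)}{p_m(m+p-2)}$, and then exploits the strict inequality $\alpha<\beta$ (this is exactly $\alpha<\mathrm{M}_{\sharp}$, and the reason \eqref{Sharp_alpha} carries $\alpha^{-}_{\mathrm{Hom}}$) to choose $\lambda$ so that $\mathrm{C}\lambda^{\beta-\alpha}\le\tfrac14$, e.g. $\lambda\le\left(\frac{1}{4\mathrm{C}}\right)^{\frac{p}{\theta\alpha_{\mathrm{Hom}}-p\alpha}}$ in the first case.

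You do identify the relevant algebra in your ``Main obstacle'' paragraph — including the correct equivalence between $\alpha\le\frac{\theta\alpha_{\mathrm{Hom}}}{p_m}$ and $\alpha\le\frac{\alpha_{\mathrm{Hom}}p}{p_m+\alpha_{\mathrm{Hom}}(m+p-3)}$ — but you state it with non-strict inequalities and never feed it back into the parameter choice; in the borderline case $\alpha=\frac{\theta\alpha_{\mathrm{Hom}}}{p_m}$ the absorption genuinely fails. To close the argument, keep the oscillation bound in the form $\mathrm{C}\lambda^{\beta}$ with $\beta>\alpha$ strict (guaranteed by $\alpha<\mathrm{M}_{\sharp}$), choose $\lambda\in\left(0,\tfrac14\right]$ so small that $\mathrm{C}\lambda^{\beta-\alpha}\le\tfrac14$, and only then take the smallness parameter of order $\lambda^{\alpha}/4$, exactly as in the paper. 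A minor further point: the $C^{\alpha_{\mathrm{Hom}},\frac{\alpha_{\mathrm{Hom}}}{p_m}}$ estimate for $\phi$ is not furnished by Theorem \ref{ThmHolderEst} (which only gives some exponent $\alpha_0$); it comes from the frozen-coefficient homogeneous theory (Ivanov, Porzio--Vespri) invoked in the paper's proof.
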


\begin{proof} Let us fix a $\delta \in (0, 1)$ to be chosen in a precise way \textit{a posteriori}. Next, we apply $(m, p)-$Approximation Lemma \ref{l2.1}, which provides an $\varepsilon>0$ small enough and a weak solution $\phi$ of \eqref{EqHomProb} such that
$$
\lvert\lvert u-\phi \rvert\rvert_{L^{\infty}\left(Q^{-}_{\frac{1}{2}}\right)}\leq\delta
$$

 It follows from \cite{Iva95} and \cite{PV93} that $\phi \in C_{\text{loc}}^{\alpha_{\mathrm{Hom}}, \frac{\alpha_{\mathrm{Hom}}}{p_m}}(Q^{-}_1)$ for a H\"{o}lder exponent $0<\alpha_{\mathrm{Hom}}\leq 1$ and $p_m$ as in \eqref{p_m}. Particularly, in the $\theta$-parabolic cylinder we have
{\scriptsize{
$$
   |\phi(x,t)-\phi(y,s)|\leq \gamma\left( |x-y|^{\alpha_{\mathrm{Hom}}} + \|u\|_{L^{\infty}\left(Q^{-}_{\frac{1}{2}}\right)}^{\frac{m-1}{p}}\sqrt[p_m]{|t-s|^{\alpha_{\mathrm{Hom}}}}\right) \quad \forall\,\,(x,t),(y,s) \in Q^{-}_{\frac{1}{2}}.
$$}}
Now, notice that by choosing $\lambda \in \left(0,  \frac{1}{4}\right]$ this yields $Q^{-}_{\lambda}\subset B_{\frac{1}{2}}\times \left(- \frac{1}{2^{\theta}},0\right]=Q^{-}_{\frac{1}{2}}$.

Hence, for $m>1$ (and \eqref{ImprovedRegions} in force) we claim that $\phi$ satisfies
$$
\sup\limits_{(x,t)\in G_{\lambda}}|\phi(x,t)-\phi(0,0)|\leq \mathrm{C}\lambda^{\frac{\theta\alpha_{\mathrm{Hom}}}{p}} \quad (\text{resp.} \,\,\, \cdots \le \mathrm{C}\lambda \,\,\,\text{if}\,\,\,m=1)
$$
for $\lambda \ll 1$, to be chosen, and $\mathrm{C}>1$. On the other hand, we obtain for any $m\ge 1$ and $p \ge 2$ the following
$$
\sup\limits_{(x,t)\in G_{\lambda}}|\phi(x,t)-\phi(0,0)|\leq \mathrm{C}\lambda^{\frac{2\alpha_{\mathrm{Hom}}(p-1)}{p_m(p+m-2)}}.
$$

Indeed, for $(x,t) \in Q^{-}_{\lambda}$ and $m>1$ (and \eqref{ImprovedRegions} in force) we get
 $$
 \begin{array}{rcl}
   \lvert \phi(x,t) -\phi(0,0) \rvert & \le & \lvert \phi(x,t)-\phi(0,t)\rvert + \lvert \phi(0,t)-\phi(0,0)\rvert \\
    & \le & \mathrm{k}_{1}\lvert x - 0\rvert^{\alpha_{\mathrm{Hom}}}+\mathrm{k}_{2}\lvert t-0 \rvert^{\frac{\alpha_{\mathrm{Hom}}}{p}} \\
    & \le & \mathrm{k}_{1}\lambda^{\alpha_{\mathrm{Hom}}}+\mathrm{k}_{2}\lambda^{\frac{\theta\alpha_{\mathrm{Hom}}}{p}} \\
    & \le & \max\{\mathrm{k}_1, \mathrm{k}_2\}\lambda^{\frac{\theta\alpha_{\mathrm{Hom}}}{p}}, \quad (\text{resp.} \,\,\,\le \max\{\mathrm{k}_1, \mathrm{k}_2\}\lambda \,\,\,\text{if}\,\,\,m=1),
 \end{array}
 $$
where we have used for $m>1$ that $\theta \le p$, which implies $\alpha_{\mathrm{Hom}}\le \frac{\theta\alpha_{\mathrm{Hom}}}{p}$ (resp. for $m=1$, it holds that $\alpha_{\mathrm{Hom}} = 1$ and $2\le \theta\leq p$, which implies $1\leq \frac{\theta}{2})$.

On the other hand, for any $m\ge 1$ and $p \ge 2$ since
$$
\theta \geq 1+\frac{p-1}{p+m-2}\geq \frac{2(p-1)}{p+m-2}
$$
we get
$$
 \begin{array}{rcl}
   \lvert \phi(x,t) -\phi(0,0) \rvert & \le & \lvert \phi(x,t)-\phi(0,t)\rvert + \lvert \phi(0,t)-\phi(0,0)\rvert \\
    & \le & \mathrm{k}_{1}\lvert x - 0\rvert^{\alpha_{\mathrm{Hom}}}+\mathrm{k}_{2}\lvert t-0 \rvert^{\frac{\alpha_{\mathrm{Hom}}}{p}} \\
    & \le & \mathrm{k}_{1}\lambda^{\alpha_{\mathrm{Hom}}}+\mathrm{k}_{2}\lambda^{\frac{\theta\alpha_{\mathrm{Hom}}}{p}} \\
    & \le & \mathrm{k}_{1}\lambda^{\frac{\alpha_{\mathrm{Hom}}(p-1)}{p+m-2}} + \mathrm{k}_{2}\lambda^{\frac{2\alpha_{\mathrm{Hom}}(p-1)}{p_m(p+m-2)}}\\
    & \le & \max\{\mathrm{k}_1, \mathrm{k}_2\}\lambda^{\frac{2\alpha_{\mathrm{Hom}}(p-1)}{p_m(p+m-2)}},
 \end{array}
 $$

Therefore, for $m>1$ (and \eqref{ImprovedRegions} in force) we can to estimate by using of $\delta$-approximation (Lemma \ref{l2.1}) the following
$$
\begin{array}{rcl}
  \sup\limits_{Q^{-}_{\lambda}}\lvert u \rvert & \le & \sup\limits_{Q^{-}_{\frac{1}{2}}} \vert u-\phi \rvert + \sup\limits_{Q^{-}_{\lambda}}\lvert \phi(x,t)-\phi(0,0)\rvert + \lvert u(0,0)-\phi(0,0)\rvert + \lvert u(0,0) \rvert\\
   & \le & 2\delta + \mathrm{C}\lambda^{\frac{\theta\alpha_{\mathrm{Hom}}}{p}}+\frac{\lambda^{\alpha}}{4} \quad (\text{resp.} \,\,\,\le 2\delta + \mathrm{C}\lambda+\frac{\lambda^{\alpha}}{4} \,\,\,\text{if}\,\,\,m=1).
\end{array}
$$

On the other hand, for any $m\ge 1$ and $p \ge 2$ we have
$$
\begin{array}{rcl}
  \sup\limits_{Q^{-}_{\lambda}}\lvert u \rvert & \le & \sup\limits_{Q^{-}_{\frac{1}{2}}} \vert u-\phi \rvert + \sup\limits_{Q^{-}_{\lambda}}\lvert \phi(x,t)-\phi(0,0)\rvert + \lvert u(0,0)-\phi(0,0)\rvert + \lvert u(0,0) \rvert\\
   & \le & 2\delta + \mathrm{C}\lambda^{\frac{2\alpha_{\mathrm{Hom}}(p-1)}{p_m(m+p-2)}}+\frac{\lambda^{\alpha}}{4}.
\end{array}
$$

Finally, by taking
$$
  \lambda \in \left(0, \,\min\left\{\frac{1}{4}, \left( \frac{1}{4\mathrm{C}}\right)^{\frac{p}{\theta\alpha_{\mathrm{Hom}}-p\alpha}}\right\}\right] \qquad \text{and} \qquad \delta \in \left(0, \frac{\lambda^{\alpha}}{4}\right] \quad \text{for} \quad m>1,
$$
(and \eqref{ImprovedRegions} in force) and
$$
\text{resp.} \quad \lambda \in \left(0, \,\min\left\{\frac{1}{4}, \left( \frac{1}{4\mathrm{C}}\right)^{\frac{1}{1-\alpha}}\right\}\right] \qquad \text{and} \qquad \delta \in \left(0, \frac{\lambda^{\alpha}}{4}\right] \quad \text{for} \quad m=1,
$$
On the other hand, for any $m\ge 1$ and $p \ge 2$ we get
$$
  \lambda \in \left(0, \,\min\left\{\frac{1}{4}, \left( \frac{1}{4\mathrm{C}}\right)^{\frac{p_m(m+p-2)}{2\alpha_{\mathrm{Hom}}(p-1)-\alpha p_m(m+p-2)}}\right\}\right] \quad \text{and} \quad \delta \in \left(0, \frac{\lambda^{\alpha}}{4}\right],
$$
and plugging all of them in the previous inequality we obtain the desired estimate.
\end{proof}

\begin{remark} It is important to stress that in the previous Lemma we must impose
$$
 \theta\alpha_{\mathrm{Hom}}-p\alpha>0 \qquad (\text{resp}. \quad 2\alpha_{\mathrm{Hom}}(p-1)-\alpha p_m(m+p-2)>0).
$$
Moreover, by invoking the definition of $\theta$ in \eqref{Int-Scal}, we get for $m>1$ (and \eqref{ImprovedRegions} in force)
$$
\alpha \in \left(0, \, \frac{p\alpha_{\mathrm{Hom}}}{p+\alpha_{\mathrm{Hom}}(m+p-3)}\right) \quad (\text{resp.}\,\,\,\alpha \in (0, \,1)\,\,\,\text{for} \,\,\,m=1).
$$
On the other hand, for any $m\ge 1$ and $p \ge 2$ we have
$$
\alpha \in \left(0, \, \frac{2\alpha_{\mathrm{Hom}}(p-1)}{p_m(m+p-2)}\right).
$$

In any case, since $\alpha_{\mathrm{Hom}} = 1$ and $p_m=2$ when $m=1$, we can re-write such conditions in a unified way
$$
\alpha \in \left(0, \, \max\left\{\frac{p\alpha_{\mathrm{Hom}}}{p_m+\alpha_{\mathrm{Hom}}(m+p-3)}, \frac{2\alpha_{\mathrm{Hom}}(p-1)}{p_m(m+p-2)}\right\}\right),
$$
where $p_m$ is given by \eqref{p_m}.
\end{remark}

In order to obtain a precise control on the influence of magnitude of $|u(0, 0)|$, we will iterate solutions
(using  Lemma \ref{1stStepInduc}) in parabolic $\lambda-$adic cylinders.

\begin{lemma}\label{induction} Suppose that the assumption of Lemma \ref{1stStepInduc} are in force. Then,
\begin{equation}\label{Induc-k-Step}
  \lvert\lvert u\rvert\rvert_{L^{\infty}\left(Q^{-}_{\lambda^{k}}\right)}\leq \lambda^{\alpha k} \qquad \text{provided} \qquad  |u(0,0)|\leq \frac{\lambda^{\alpha k}}{4} \qquad \text{for all} \quad k \in \mathbb{N}.
\end{equation}
\end{lemma}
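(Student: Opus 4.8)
The plan is to argue by induction on $k\in\mathbb{N}$, with Lemma~\ref{1stStepInduc} serving as both the base case ($k=1$) and, after an appropriate rescaling, the inductive step. The key observation is that the hypotheses of Lemma~\ref{1stStepInduc}---boundedness by $1$, smallness of $\|f\|_{L^{q,r}}$ and of $\Theta_{\mathcal{A}}$---are stable under the intrinsic $\theta$-parabolic scaling $ (x,t)\mapsto(\lambda x,\lambda^{\theta}t)$ combined with the multiplicative renormalization $u\mapsto \lambda^{-\alpha}u$, precisely because $\theta$ in \eqref{Int-Scal} is chosen so that the rescaled function again solves an equation of the same structural type (P1)--(P4) with a source whose norm is controlled by $\lambda$ to a positive power and coefficients whose oscillation only decreases.

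Concretely, suppose \eqref{Induc-k-Step} holds for some $k\ge 1$ under the assumption $|u(0,0)|\le \lambda^{\alpha k}/4$. Assuming additionally $|u(0,0)|\le \lambda^{\alpha(k+1)}/4$ (which is stronger, and is exactly the hypothesis needed to run the step), define
$$
v(x,t)\defeq \frac{u(\lambda^{k}x,\lambda^{k\theta}t)}{\lambda^{\alpha k}}, \qquad (x,t)\in Q^{-}_{1}.
$$
By the inductive hypothesis $\|v\|_{L^{\infty}(Q^{-}_{1})}\le 1$. A direct computation shows that $v$ is a weak solution in $Q^{-}_{1}$ of an equation of the form \eqref{1.1} with coefficient field $\mathcal{A}_{k}(x,t,s,\xi)\defeq \lambda^{k(\theta-p-\alpha(m-1)+\alpha)}\mathcal{A}(\lambda^{k}x,\lambda^{k\theta}t,\lambda^{\alpha k}s,\lambda^{\alpha k-k}\xi)$ and source $f_{k}(x,t)\defeq \lambda^{k(\theta-\alpha)}f(\lambda^{k}x,\lambda^{k\theta}t)$; here one checks, using $\theta=p-\alpha(m+p-3)$, that $\mathcal{A}_{k}$ still satisfies (P1)--(P3) with the \emph{same} structural constants (the powers of $\lambda$ cancel against the homogeneity $|s|^{m-1}|\xi|^{p-1}$ of the bounds), while $\Theta_{\mathcal{A}_{k}}\le \Theta_{\mathcal{A}}\le\varepsilon$ since $\lambda\le\frac14<1$ and the modulus of continuity is evaluated at a smaller argument. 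For the source, a change of variables in the mixed-norm integral gives $\|f_{k}\|_{L^{q,r}(Q^{-}_{1})}=\lambda^{k\beta}\|f\|_{L^{q,r}}$ for the exponent $\beta\defeq \theta-\alpha-\tfrac{n}{q}-\tfrac{\theta}{r}$, and the point is that the weak compatibility condition \eqref{w-cc} (equivalently \eqref{cc}) together with the definition of $\theta$ and the admissible range of $\alpha$ forces $\beta>0$, so that $\|f_{k}\|_{L^{q,r}(Q^{-}_{1})}\le \|f\|_{L^{q,r}}<\varepsilon$.

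Thus $v$ satisfies all hypotheses of Lemma~\ref{1stStepInduc}. Moreover $|v(0,0)|=\lambda^{-\alpha k}|u(0,0)|\le \lambda^{-\alpha k}\cdot\tfrac14\lambda^{\alpha(k+1)}=\tfrac14\lambda^{\alpha}$, so the conclusion of Lemma~\ref{1stStepInduc} yields $\|v\|_{L^{\infty}(Q^{-}_{\lambda})}\le \lambda^{\alpha}$. Unwinding the scaling, this reads $\|u\|_{L^{\infty}(Q^{-}_{\lambda^{k+1}})}\le \lambda^{\alpha(k+1)}$, which is exactly \eqref{Induc-k-Step} at step $k+1$. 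This closes the induction.

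The main obstacle---and the only place where the specific choice of $\theta$ and the precise form of the compatibility conditions really enter---is the verification that the scaling genuinely preserves the structural hypotheses: one must track the homogeneity bookkeeping so that (i) the rescaled coefficient field $\mathcal{A}_{k}$ retains the \emph{same} ellipticity and growth constants $\mathrm{C}_1,\mathrm{C}_2,\gamma_i,\psi_i$ (not merely comparable ones, since the constants $\varepsilon,\lambda$ from Lemma~\ref{1stStepInduc} are frozen once and for all and may not be reapplied with worse data), and (ii) the exponent $\beta$ on the rescaled source norm is strictly positive. Point (ii) is where \eqref{w-cc} is used in its sharp form: writing $\beta$ out and substituting $\theta=p-\alpha(m+p-3)$ shows $\beta>0$ is equivalent to $\alpha$ lying below the second competitor in \eqref{Sharp_alpha}, which is guaranteed since $\alpha$ is defined as the minimum in \eqref{Sharp_alpha}. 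A minor technical caveat is to note that the stronger smallness hypothesis $|u(0,0)|\le\lambda^{\alpha(k+1)}/4$ assumed at step $k+1$ indeed implies the hypothesis $|u(0,0)|\le\lambda^{\alpha k}/4$ of step $k$, so the inductive hypothesis may legitimately be invoked; this is immediate since $\lambda^{\alpha}\le 1$.
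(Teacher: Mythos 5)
Your proof is correct and follows the paper's argument essentially verbatim: induction with Lemma \ref{1stStepInduc} as both base case and iterated step, the rescaling $v_k(x,t)=\lambda^{-k\alpha}u(\lambda^{k}x,\lambda^{k\theta}t)$, and the key verification that $\|f_k\|_{L^{q,r}(Q^{-}_{1})}$ does not grow because the exponent $\beta=\theta-\alpha-\tfrac{n}{q}-\tfrac{\theta}{r}$ is nonnegative exactly when $\alpha$ does not exceed the second competitor in \eqref{Sharp_alpha}, which is the paper's computation as well. The one slip is the written prefactor of $\mathcal{A}_k$: consistency with your (correct) $f_k=\lambda^{k(\theta-\alpha)}f(\lambda^{k}x,\lambda^{k\theta}t)$ forces $\mathcal{A}_k(x,t,s,\xi)=\lambda^{k(\theta-\alpha-1)}\mathcal{A}(\lambda^{k}x,\lambda^{k\theta}t,\lambda^{k\alpha}s,\lambda^{k(\alpha-1)}\xi)$, where $\theta-\alpha-1=(p-1)-\alpha(m+p-2)$ as in the paper, rather than $\lambda^{k(\theta-p-\alpha(m-1)+\alpha)}$; with this corrected exponent the homogeneity cancellation you invoke does hold and (P1)--(P3) are preserved with the same structural constants.
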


\begin{proof} The proof follows by induction process. The case $k=1$ is precisely the statement of Lemma \ref{1stStepInduc}. Suppose now that \eqref{Induc-k-Step} holds for all the values of $j = 1, 2, \cdots, k$. Our goal is to check it for $j = k+1$. For this purpose, define $v_k: Q^{-}_{1}\to \mathbb{R}$ given by
$$
v_k(x,t) \defeq \frac{u(\lambda^{k}x,\lambda^{k\theta}t)}{\lambda^{k\alpha}}.
$$

The function $v_k$ is a weak solution of
$$
\frac{\partial v_k}{\partial t}-\Div(\mathcal{A}_k(x,t,v_k,\nabla v_k))=f_k(x,t) \quad \text{in}  \quad  Q^{-}_{1},
$$
where
$$
\left\{
\begin{array}{rcl}
  \mathcal{A}_k(x,t, s, \xi) & \defeq  & \lambda^{-k\alpha(m+p-2)+k(p-1)}\mathcal{A}(\lambda^{k}x,\lambda^{k\theta}t,\lambda^{k\alpha}s,\lambda^{(\alpha-1)k}\xi) \\
  f_k(x, t) & \defeq & \lambda^{-k\alpha(m+p-2)+k(p-1)+k}f(\lambda^{k}x,\lambda^{k\theta}t).
\end{array}
\right.
$$

It is straightforward to check that $\mathcal{A}_k$ fulfills the properties (P1)-(P2). Moreover, the induction hypothesis implies that
$$
  \lvert\lvert v_k \rvert\rvert_{L^{\infty}\left(Q^{-}_{1}\right)} \leq 1 \quad \text{and} \quad |v_k(0,0)|=\frac{|u(0,0)|}{\lambda^{k\alpha}}\leq \frac{\lambda^{(k+1)\alpha}}{4\lambda^{k\alpha}}=\frac{\lambda^{\alpha}}{4}.
$$

Moreover, we can estimate
$$
\begin{array}{rcl}
  \lvert \lvert f_k \rvert\rvert_{L^{q,r}(Q_{1})}^{r} & = & \displaystyle  \int\limits_{-1}^{0}\Bigg(\int\limits_{B_{1}}\lvert f_k(x,t)\rvert^{q} dx\Bigg)^{\frac{r}{q}}dt\\
   & = & \displaystyle \int\limits_{-1}^{0}\Bigg(\int\limits_{B_{1}}\lambda^{\left(-k\alpha(m+p-2)+k(p-1)+k\right)q}\lvert f(\lambda^{k}x,\lambda^{k\theta} t)\rvert^{q}dx\Bigg)^{\frac{r}{q}}dt\\
   & = & \displaystyle \int\limits_{-1}^{0}\Bigg(\int\limits_{B_{\lambda^{k}}}\lambda^{\left[\left(-k\alpha(m+p-2)+k(p-1)+k\right)q-nq\right]}\lvert f(z,\lambda^{k\theta} t)\rvert^{q}dz\Bigg)^{\frac{r}{q}} dt\\
   & = & \displaystyle \lambda^{\left[\left(-k\alpha(m+p-2)+k(p-1)+k\right)q-nq\right]\frac{r}{q}}.\lambda^{-k\theta}\int\limits_{-\lambda^{k\theta}}^{0}\Bigg(\int\limits_{B_{\lambda^{k}}}|f(z,\tau)|^{q} dz\Bigg)^{\frac{r}{q}}d\tau
\end{array}
$$

Hence,
$$
\lvert\lvert f_k \rvert\rvert_{L^{q,r}(Q^{-}_{1})}^{r} \leq \lambda^{\left[\left(-k\alpha(m+p-2)+k(p-1)+k\right)q-nk\right]\frac{r}{q}-k\theta}\lvert\lvert f \rvert\rvert_{L^{q,r}(Q^{-}_{\lambda^{k}})}^{r}.
$$
Now, observe that
{\scriptsize{
$$
\left[\left(-k\alpha(m+p-2)+k(p-1)+k\right)q-nk\right]\frac{r}{q}-k\theta \geq 0 \iff \alpha \leq \frac{r(pq-n)-pq}{q[(m+p-2)r-(m+p-3)]}
$$}}

As a result, since $\lambda \in \left(0, \frac{1}{4}\right]$, it follows that
$$
\lvert\lvert f_k \rvert\rvert_{L^{q,r}(Q^{-}_{1})}\leq \lvert\lvert f \rvert\rvert_{L^{q,r}(Q^{-}_{\lambda^{k}})} \leq \lvert\lvert f \rvert\rvert_{L^{q,r}(Q^{-}_{1})}\leq \delta.
$$
Finally,
$$
\sup_{0< \rho \leq \rho_{0}} \sup_{(y, \tau),(x,t) \in Q_{\rho}} \Theta_{\mathcal{A}_k}(y, \tau, x, t) \leq \sup_{0< \rho \leq \rho_{0}} \sup_{(y, \tau),(x,t)  \in Q_{\rho^{k}}} \Theta_{\mathcal{A}}(y, \tau, x, t) \leq \delta
$$

Therefore, $v_k$ falls into the hypothesis of Lemma \ref{1stStepInduc}. Hence, we conclude that
$$
  \lvert \lvert v_k \rvert\rvert_{L^{\infty}\left(Q^{-}_{\lambda}\right)} \leq \lambda^{\alpha} \qquad \Longrightarrow \qquad \lvert\lvert u\rvert\rvert_{L^{\infty}\left(Q^{-}_{\lambda^{(k+1)}}\right)}\leq \lambda^{\alpha (k+1)},
$$
thereby concluding the induction process.
\end{proof}

\section{Proof of Theorem \ref{t1.2}}

Before proving our main Theorem, next result ensures that the small magnitude control is not restrictive and works in the case of small radii.

\begin{proposition}\label{rho} Let $u$ be a bounded weak solution of \eqref{1.1} in $Q^{-}_{1}$ and $\lambda$ as in Lemma \ref{1stStepInduc}. Then, for $\rho \in (0, \lambda)$ and $\mathrm{C}>0$ a universal constant, we have
$$
\lvert \lvert u \rvert \rvert_{L^{\infty}\left(Q^{-}_{\rho}\right)}\leq \mathrm{C}\rho^{\alpha} \quad  \text{provided} \quad |u(0,0)|\leq \frac{\rho^{\alpha}}{4}.
$$
\end{proposition}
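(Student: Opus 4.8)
The plan is to bridge the discrete family of estimates furnished by Lemma \ref{induction} (valid at the radii $\lambda^k$, $k\in\mathbb{N}$) to an arbitrary radius $\rho\in(0,\lambda)$, using nothing more than the monotonicity of $t\mapsto t^\alpha$ and the nesting of the backward parabolic cylinders $Q^{-}_\rho$.

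Concretely, given $\rho\in(0,\lambda)$, I would first choose the integer $k=k(\rho)\defeq \lfloor \log_\lambda \rho\rfloor$, which satisfies $k\ge 1$ (since $\rho<\lambda=\lambda^1$ and $0<\lambda\le \frac{1}{4}$) and, because $\lambda<1$,
$$
\lambda^{k+1}<\rho\le \lambda^{k}.
$$
Next I would propagate the magnitude hypothesis to scale $k$: as $\alpha>0$ and $\rho\le\lambda^k$, one has $\rho^{\alpha}\le\lambda^{k\alpha}$, hence
$$
|u(0,0)|\le \frac{\rho^{\alpha}}{4}\le \frac{\lambda^{k\alpha}}{4},
$$
which is exactly the smallness required to apply Lemma \ref{induction} at the $k$-th step, giving $\|u\|_{L^{\infty}(Q^{-}_{\lambda^{k}})}\le \lambda^{\alpha k}$. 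Since $\rho\le\lambda^k$ and $\theta>0$, we have $B_\rho\subset B_{\lambda^k}$ and $(-\rho^{\theta},0]\subset(-\lambda^{k\theta},0]$, so $Q^{-}_\rho\subset Q^{-}_{\lambda^k}$ and therefore $\|u\|_{L^{\infty}(Q^{-}_\rho)}\le\lambda^{\alpha k}$. Finally, the inequality $\lambda^{k+1}<\rho$ yields $\lambda^{\alpha k}=\lambda^{-\alpha}(\lambda^{k+1})^{\alpha}<\lambda^{-\alpha}\rho^{\alpha}$, so the claim follows with the universal constant $\mathrm{C}\defeq\lambda^{-\alpha}>1$ (recall $\lambda\in(0,\frac{1}{4}]$).

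Since each ingredient — the $\lambda$-adic iteration of Lemma \ref{induction}, the monotonicity of powers, and the inclusion of the cylinders $Q^{-}_\rho$ in $Q^{-}_{\lambda^k}$ — is already in place, I do not anticipate any genuine obstacle. The only point deserving a moment's care is checking that the interior magnitude control $|u(0,0)|\le \rho^{\alpha}/4$ is not destroyed by rounding $\rho$ up to $\lambda^{k}$, which holds precisely because $\rho\le\lambda^{k}$ forces $\rho^{\alpha}\le\lambda^{k\alpha}$.
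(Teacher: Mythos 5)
Your $\lambda$-adic interpolation step is exactly the paper's concluding argument: pick $k$ with $\lambda^{k+1}<\rho\le\lambda^{k}$, note $|u(0,0)|\le\rho^{\alpha}/4\le\lambda^{k\alpha}/4$, invoke Lemma \ref{induction} to get $\|u\|_{L^{\infty}(Q^{-}_{\lambda^{k}})}\le\lambda^{k\alpha}$, and conclude $\|u\|_{L^{\infty}(Q^{-}_{\rho})}\le(\rho/\lambda)^{\alpha}$ with $\mathrm{C}=\lambda^{-\alpha}$. That part is correct.

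There is, however, a gap in how you enter Lemma \ref{induction}. That lemma (through Lemma \ref{1stStepInduc}) is only available under the smallness regime: $\|u\|_{L^{\infty}(Q^{-}_{1})}\le 1$ together with $\max\{\|f\|_{L^{q,r}},\,\Theta_{\mathcal{A}}\}<\varepsilon$. Proposition \ref{rho} assumes only that $u$ is a \emph{bounded} weak solution, with no control on $\|u\|_{L^{\infty}}$, on $\|f\|_{L^{q,r}}$, or on the oscillation of the coefficients, so applying Lemma \ref{induction} directly to $u$ is not licensed by the stated hypotheses. The paper's proof spends its first half precisely on this reduction: it introduces the rescaled function $\zeta(x,t)\defeq \xi u(\xi^{\mathrm{a}}x,\xi^{m+p-3+p\mathrm{a}}t)$, checks that $\zeta$ solves an equation of the same structural class with data $\hat{f}$, $\hat{\mathcal{A}}$, uses the compatibility condition \eqref{w-cc} to choose $\mathrm{a}>0$ so that the exponent multiplying $\|f\|_{L^{q,r}}$ is positive, and then takes $\xi\ll 1$ to force $\|\zeta\|_{L^{\infty}(Q^{-}_{1})}\le 1$ and $\max\{\|\hat{f}\|_{L^{q,r}},\Theta_{\hat{\mathcal{A}}}\}\le\varepsilon$ (in the spirit of Remark \ref{r2.1}). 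Only after this normalization does the $\lambda$-adic chaining you wrote apply; the universal constant then also absorbs the normalization factor. So either add the smallness regime explicitly as a hypothesis, or include this scaling reduction before your interpolation step.
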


\begin{proof} Let us define the auxiliary function $\zeta: Q^{-}_{1} \to \mathbb{R}$ given by
$$
\zeta(x,t) \defeq \xi u(\xi^{\mathrm{a}}x,\xi^{m+p-3+p\mathrm{a}}t)
$$
with constants $\mathrm{a}>0$ and $\xi>0$ to be determined \textit{a posteriori}.

Notice that $\zeta$ satisfies in weak sense
$$
\frac{\partial \zeta}{\partial t}-\Div(\hat{\mathcal{A}}(x,t,\zeta,\nabla \zeta))=\hat{f}(x,t) \qquad \text{in} \qquad Q^{-}_{1},
$$
where
$$
\left\{
\begin{array}{rcl}
  \hat{\mathcal{A}}(x,t,s, \varsigma) & \defeq & \xi^{(1+\mathrm{a})(p-1)+(m-1)}\mathcal{A}(\xi^{\mathrm{a}}x,\xi^{m+p-3+p\mathrm{a}}t,\xi^{-1}s,\xi^{-(1+\mathrm{a})}\varsigma) \\
  \hat{f}(x,t) & \defeq & \xi^{(1+\mathrm{a})(p-1)+(m-1)+\mathrm{a}}f(\xi^{\mathrm{a}}x,\xi^{m+p-3+p\mathrm{a}}t)
\end{array}
\right.
$$
Now,
$$
\lvert\lvert \hat{f} \rvert\rvert_{L^{q,r}(Q^{-}_1)}^{r} \leq \xi^{[((p-1)(1+\mathrm{a})+m-1+\mathrm{a})q-n\mathrm{a}]\frac{r}{q}-(m+p-3+p\mathrm{a})}\lvert\lvert f \rvert\rvert_{L^{q,r}(Q_{1})}^{r},
$$
where $\mathrm{a}>0$ is chosen is such way that
$$
[((p-1)(1+\mathrm{a})+m-1+a)q-n\mathrm{a}]\frac{r}{q}-(m+p-3+p\mathrm{a})>0,
$$
which it is possible due to the compatibility conditions \eqref{w-cc} by doing
$$
\mathrm{a}<\frac{q[(m+p-2)(r-1)+1]}{pq(r-1)-nr}.
$$

Moreover, taking $\xi \ll 1$, we fall into the smallness regime required in Lemma \ref{Induc-k-Step}, i.e.,
$$
\lvert\lvert \zeta \rvert\rvert_{L^{\infty}(Q^{-}_{1})} \leq 1 \quad  \text{and} \quad \max\left\{\lvert\lvert \hat{f} \rvert\rvert_{L^{q,r}(Q^{-}_1)}, \,\,\Theta_{\hat{\mathcal{A}}}(y, \tau, x, t)\right\} \leq\varepsilon.
$$

Therefore, given $\rho \in (0, \lambda)$, there exists $k \in \mathbb{N}$ such that
$$
\lambda^{k+1}<\rho \leq \lambda^{k}.
$$
Moreover, since
$$
|u(0,0)|\leq \frac{\rho^{\alpha}}{4}\leq \frac{\lambda^{k\alpha}}{4}
$$
it follows from Lemma \ref{Induc-k-Step} that
$$
   \lvert\lvert u \rvert \rvert_{L^{\infty}\left(Q^{-}_{\lambda^{k}}\right)} \leq \lambda^{k\alpha}.
$$

In conclusion, we obtain the following estimate
$$
\lvert\lvert u \rvert \rvert_{L^{\infty}(Q^{-}_{\rho})}\leq \lvert\lvert u \rvert \rvert_{L^{\infty}\left(Q^{-}_{\lambda^{k}}\right)} \leq \lambda^{k\alpha} \leq \Big(\frac{\rho}{\lambda} \Big)^{\alpha} = \mathrm{C}\rho^{\alpha}.
$$
\end{proof}

Finally, we are in a position to prove Theorem \ref{t1.2}.

\begin{proof}[{\bf Proof of Theorem \ref{t1.2}}] We claim that it is enough to prove that there exists a constant $\mathrm{M}_0(\verb"universal")>0$ such that
\begin{equation}\label{MainEstThm}
  \lvert \lvert u-u(0,0) \rvert \rvert_{L^{\infty}\left(Q^{-}_{\rho}\right)} \leq \mathrm{M}_0\rho^{\alpha}.
\end{equation}

 In effect, since $u$ is a continuous function, we can define
 \begin{equation}\label{Def-mu}
   \mu \defeq (4|u(0,0)|)^{1/\alpha}\geq 0.
 \end{equation}
Now, let us consider $\rho \in (0, \lambda)$. Thus, the analysis will follow by considering three independent cases:
\begin{itemize}
  \item[({\bf 1})] If $ \rho \in [\mu, \lambda)$:

  In this case we have
 $$
   |u(0,0)|= \frac{\mu^{\alpha}}{4} \leq \frac{\rho^{\alpha}}{4}.
 $$
  Therefore, Proposition \ref{rho} yields
\begin{equation}\label{FirstEstThm}
  \displaystyle  \sup\limits_{Q^{-}_{\rho}} |u(x,t)-u(0,0)|\leq \mathrm{C}\rho^{\beta} + |u(0,0)| \leq \Big(\mathrm{C}+\frac{1}{4} \Big)\rho^{\alpha}.
\end{equation}

  \item[({\bf 2})] If $\rho \in (0, \mu)$:

  For $(x,t) \in Q^{-}_{1}$, we define
$$
w(x,t) \defeq \frac{u(\mu x, \mu^{\theta}t)}{\mu^{\alpha}}.
$$

Notice that $|w(0, 0)| = \frac{1}{4}$. Moreover, $w$ fulfils in the weak sense
$$
\frac{\partial w}{\partial t}-\Div(\mathcal{A}_{\mu}(x,t,w,\nabla w))= f_{\mu}(x,t) \qquad \text{in} \qquad Q^{-}_{1},
$$
where
$$
\left\{
\begin{array}{rcl}
  \mathcal{A}_{\mu}(x,t, s, \varsigma) & \defeq & \mu^{-[\alpha(m-1)+(\alpha-1)(p-1)]}\mathcal{A}(\mu x, \mu^{\theta} t, \mu^{\alpha}s, \mu^{\alpha -1}\varsigma) \\
  f_{\mu}(x, t) & \defeq & \mu^{-[\alpha(m-1)+(\alpha-1)(p-1)]+1}f(\mu x, \mu^{\theta} t).
\end{array}
\right.
$$

Therefore, one more time Proposition \ref{rho} applied to $u$, it follows that
\begin{equation}
\lvert \lvert w \rvert \rvert_{L^{\infty}(Q^{-}_{1})} = \frac{1}{\mu^{\alpha}}\lvert \lvert u \rvert \rvert_{L^{\infty}(Q^{-}_{\mu})}\leq \frac{\mathrm{C}\mu^{\alpha}}{\mu^{\alpha}}= \mathrm{C},
\end{equation}
since $|u(0,0)|=\frac{\mu^{\alpha}}{4}$. Such a uniform estimate, together with $C_{\text{loc}}^{0,\alpha}$-regularity estimate from Theorem \ref{ThmHolderEst}, ensure us the existence of a radius $\rho_{0}>0$, depending on the data, such that
$$
\lvert w(x,t) \rvert \geq \frac{1}{8} \quad \text{for all} \quad (x,t) \in Q^{-}_{\rho_{0}}.
$$
In effect, from $\alpha_0-$H\"{o}lder estimates (Theorem \ref{ThmHolderEst}) we obtain in $Q^{-}_{\rho_{0}}$
$$
\begin{array}{rcl}
  \frac{1}{4} & = & |w(0, 0)| \\
   & \le & |w(x_m, t_m)-w(0, 0)|+|w(x_m, t_m)| \\
   & \le & \gamma \cdot \left(|x_m|^{\alpha_0}+ |t_m|^{\frac{\alpha_0}{p_m}} \right) +|w(x_m, t_m)|\\
   & \le & \gamma \cdot \left(\rho_0^{\alpha_0}+ \rho_0^{\frac{\alpha_0\theta}{p_m}} \right) + \frac{1}{8} \\
   & \le & 2\gamma\rho_0^{\frac{\alpha_0\theta}{p_m}} + \frac{1}{8},
\end{array}
$$
where $(x_m, t_m) \in Q^{-}_{\rho_{0}}$ is a point which $w$ achieves the minimum. Hence,
\begin{equation}\label{Estrho_0}
  \rho_0 \ge \left(\frac{1}{16\gamma}\right)^{\frac{p_m}{\alpha_0\theta}}.
\end{equation}

As a result, $w$ satisfies in the weak sense
$$
\frac{\partial w}{\partial t}-\Div(\mathrm{a}_0(x,t)|\nabla w|^{p-2}\nabla w))= f(x, t) \quad \text{in} \quad  Q^{-}_{\rho_{0}},
$$
where $f \in L^{q, r}(Q^{-}_1)$ (with \eqref{w-cc} in force) and $(x, t) \mapsto \mathfrak{a}_0(x,t) = \mathfrak{a}_0(x,t, w)$ is a continuous function. Moreover,
$$
0<\mathrm{C}_1\left(\frac{1}{8}\right)^{m-1}\le \mathfrak{a}_0(x,t, w) \leq \mathrm{C}_2 \lvert \lvert w \rvert \rvert_{L^{\infty}(Q^{-}_{1})}^{m-1}< \infty
$$
for positive constants $\mathrm{C}_1$ and $\mathrm{C}_2$ coming from properties (P1)-(P2). In other words, one can treat the original equation (under above considerations) as having a parabolic $p-$Laplacian behaviour.

Therefore, from Theorem \ref{ContCondp_laplace}, $w \in C^{\hat{\alpha}, \frac{\hat{\alpha}}{\hat{\theta}}}(Q^{-}_{\rho_0})$, with
$$
\hat{\alpha} = \frac{(pq-n)r-pq}{q[(p-1)r-(p-2)]} \geq \frac{(pq-n)r-pq}{q[(r-1)(m+p-2)+1]}.
$$
Additionally, notice that
$$
\begin{array}{rcl}
  \hat{\alpha} & \geq & \min\left\{\max\left\{\frac{\alpha^{-}_{\mathrm{Hom}}p}{p_m+\alpha_{\mathrm{Hom}}(m+p-3)}, \frac{2\alpha^{-}_{\mathrm{Hom}}(p-1)}{p_m(m+p-2)}\right\}, \frac{(pq-n)r-pq}{q[(p-1)r-(p-2)]}\right\} \\
   & \ge & \min\left\{\max\left\{\frac{\alpha^{-}_{\mathrm{Hom}}p}{p_m+\alpha_{\mathrm{Hom}}(m+p-3)}, \frac{2\alpha^{-}_{\mathrm{Hom}}(p-1)}{p_m(m+p-2)}\right\}, \frac{(pq-n)r-pq}{q[(r-1)(m+p-2)+1]}\right\} \\
   & = & \alpha.
\end{array}
$$

From Theorem \ref{ContCondp_laplace} we get for a constant $\mathrm{K}_0(\verb"universal")>0$
$$
\lvert\lvert w -w(0,0)\rvert \rvert_{L^{\infty}(Q^{-}_{\rho})}\leq \mathrm{K}_0\rho^{\hat{\alpha}} \quad \forall  \,\, 0<\rho<\frac{\rho_0}{2}.
$$

Moreover, since  $\alpha \le \hat{\alpha}$, we can conclude that
$$
\sup\limits_{(x,t)\in Q^{-}_{\mu\rho}} \lvert u(x,t)-u(0,0) \rvert \leq \mathrm{K}_0(\mu \rho )^{\alpha}, \quad \text{for any} \quad  0<\mu\rho<\mu\frac{\rho_{0}}{2}.
$$

Hence, relabelling
\begin{equation}\label{SecondEstThm}
\sup\limits_{(x,t)\in Q^{-}_{\rho}} \lvert u(x,t)-u(0,0) \rvert \leq \mathrm{K}_0\rho^{\alpha}, \quad  \text{for any} \quad 0<\rho<\mu\frac{\rho_{0}}{2}.
\end{equation}

  \item[({\bf 3})] If $\rho \in \left[\mu\frac{\rho_0}{2}, \mu\right)$:

  In this case, we obtain (by using Item (1) and \eqref{Estrho_0})
\begin{equation}\label{LastEstThm}
  \begin{array}{rcl}
\displaystyle  \sup\limits_{(x,t)\in Q^{-}_{\rho}} \lvert u(x,t)-u(0,0) \rvert & \le &  \displaystyle \sup\limits_{(x,t)\in Q_{\mu}} \lvert u(x,t)-u(0,0) \rvert \\
   & \le &  \left(\mathrm{C}+\frac{1}{4}\right)\mu^{\alpha} \\
   & \le &  \left(\mathrm{C}+\frac{1}{4}\right)\Big(\frac{2\rho}{\rho_{0}}\Big)^{\alpha} \\
   & \le & \left(\mathrm{C}+\frac{1}{4}\right)2^{\alpha}(16\gamma)^{\frac{p_m\alpha}{\alpha_0 \theta}}\rho^{\alpha}.
\end{array}
\end{equation}
\end{itemize}

Therefore, by selecting
{\scriptsize{
$$
  \mathrm{M}_0 \defeq \max\left\{\mathrm{C} + \frac{1}{4}, \,\mathrm{K}_0, \, \left(\mathrm{C}+\frac{1}{4}\right)2^{\alpha}(16\gamma)^{\frac{p_m\alpha}{\alpha_0 \theta}}\right\} = \max\left\{\mathrm{K}_0, \, \left(\mathrm{C}+\frac{1}{4}\right)2^{\alpha}(16\gamma)^{\frac{p_m\alpha}{\alpha_0 \theta}}\right\}
$$}}
and combining \eqref{FirstEstThm}, \eqref{SecondEstThm} and \eqref{LastEstThm}, we obtain \eqref{MainEstThm}, for any $\rho \in (0, \lambda)$.

Finally, a standard covering argument yields the desired estimate in any compactly supported sub-domain,
namely
$$
[u]_{C^{\alpha, \frac{\alpha}{\theta}}\left(Q^{-}_{\frac{1}{2}}\right)} \leq \mathrm{M}_0(\verb"universal"),
$$
thereby completing the proof.
\end{proof}

\section{An application to intrinsic Liouville results}

As an application of our main Theorem \ref{t1.2}, we prove a Liouville type result, which states that an entire weak solution of double degenerate parabolic equation
$$
\frac{\partial u}{\partial t}-\Div{(m\lvert u\rvert^{m-1}\lvert \nabla u\rvert^{p-2}\nabla u)}=0
$$
must be constant provided $|u(x, t)| = \text{O}(\lvert \lvert (x,t) \rvert\rvert^{\chi})$ for an appropriated semi-norm and exponent $\chi>0$ to be defined \textit{a posteriori}.

Before presenting such a classification result, let us define some notations: given $\beta \in (0,\alpha)$, where $\alpha \in (0,1)$ is the sharp regularity exponent in \eqref{Sharp_alpha}. Now, as before, let us consider the temporal scaling:
\begin{equation}\label{Scalbeta}
    \theta(\beta)=p(1-\beta)+\beta(3-m).
\end{equation}
For such a $\theta(\beta)$, consider the intrinsic parabolic cylinders:
$$
    \mathcal{C}^{\theta(\beta)}_{\tau}\colon= \Big(-\frac{1}{2}\tau^{\theta(\beta)},\frac{1}{2}\tau^{\theta(\beta)}\Big)\times B_{\tau}(0)
$$
and the intrinsic norm:
$$
    \lvert\lvert (x,t)\rvert\rvert_{\theta(\beta)} \colon= \lvert t \rvert^{{1/\theta(\beta)}}+\lvert x \rvert.
$$

Next result is the doubly degenerate version for the degenerate one in \cite[Theorem 1]{TU142}, which we will follow the lines of the proof.

\begin{theorem}
Let $u$ be an entire solution to
$$
   \mathcal{Q}_{m, p} u \defeq \frac{\partial u}{\partial t}-\Div{(m\lvert u\rvert^{m-1}\lvert \nabla u\rvert^{p-2}\nabla u)}=0 \quad \text{in} \quad \R^n \times \R.
$$
Suppose, for some $0<\beta<1$, there holds
\begin{equation}\label{Eq3.5}
    u(x,t)=\text{O}(\lvert\lvert (x,t)\rvert\rvert^{\beta}_{\theta(\beta)}) \quad \text{as} \quad \lvert\lvert (x,t)\rvert\rvert_{\theta(\beta)}\to \infty.
\end{equation}
Then, $u$ is constant.
\end{theorem}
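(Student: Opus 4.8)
The plan is to exploit the sharp interior regularity estimate of Theorem \ref{t1.2} together with the scale invariance of the homogeneous equation $\mathcal{Q}_{m,p}u = 0$, in the classical Liouville fashion. First I would fix $R \gg 1$ and rescale: set
$$
u_R(x,t) \defeq \frac{u(Rx, R^{\theta(\beta)}t)}{R^{\beta}},
$$
and observe that, since $\mathcal{Q}_{m,p}$ is invariant under the intrinsic scaling $(x,t,s)\mapsto (Rx, R^{\theta(\beta)}t, R^{\beta}s)$ precisely when $\theta(\beta) = p - \beta(m+p-3) = p(1-\beta)+\beta(3-m)$ — which is exactly \eqref{Scalbeta} — the function $u_R$ is again an entire weak solution of the homogeneous equation $\mathcal{Q}_{m,p}u_R = 0$ in $\R^n\times\R$. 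The growth hypothesis \eqref{Eq3.5} then translates into a uniform (in $R$) bound: for any fixed intrinsic cylinder $\mathcal{C}^{\theta(\beta)}_{K}$ one has $\|u_R\|_{L^\infty(\mathcal{C}^{\theta(\beta)}_K)} \le C(K)$ independently of $R$, because $\|(Rx,R^{\theta(\beta)}t)\|_{\theta(\beta)} = R\,\|(x,t)\|_{\theta(\beta)}$ by homogeneity of the intrinsic norm.

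Next I would apply Theorem \ref{t1.2} to $u_R$ on, say, $\mathcal{C}^{\theta(\beta)}_{2}$ (after a harmless translation/dilation matching it to the cylinders $Q^-_1$ of the theorem; here $f\equiv 0$ and $\mathcal{A}$ has frozen coefficients, so \eqref{w-cc}, (P1)--(P4) hold trivially and $\alpha$ may be taken as any value in $(0,\mathrm{M}_\sharp)$, in particular any value $> \beta$ since $\beta<\alpha$). This yields
$$
[u_R]_{C^{\alpha,\frac{\alpha}{\theta}}(\mathcal{C}^{\theta(\beta)}_1)} \le \mathrm{M}_0 \,\|u_R\|_{L^\infty(\mathcal{C}^{\theta(\beta)}_2)} \le \mathrm{M}_0\, C(2),
$$
with $\mathrm{M}_0$ and $C(2)$ independent of $R$, and crucially $\alpha>\beta$. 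Unwinding the scaling, the estimate on $u_R$ gives, for points $(x_1,t_1),(x_2,t_2)$ in a fixed cylinder,
$$
|u(x_1,t_1)-u(x_2,t_2)| \le \mathrm{M}_0 C(2)\, R^{\beta-\alpha}\,\big(\|(x_1,t_1)-(x_2,t_2)\|_{\theta(\beta)}\big)^{\alpha}\cdot(\text{scale factor}),
$$
where the key point is the gain $R^{\beta-\alpha}\to 0$ as $R\to\infty$ since $\alpha>\beta$. Letting $R\to\infty$ forces the oscillation of $u$ on every compact set to vanish, hence $u$ is constant on $\R^n\times\R$.

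The main obstacle — and the place deserving care — is bookkeeping the intrinsic scaling exponents so that $u_R$ genuinely solves the \emph{same} homogeneous equation: one must verify that the exponent $\theta(\beta)$ in \eqref{Scalbeta} is exactly the one making $m|u_R|^{m-1}|\nabla u_R|^{p-2}\nabla u_R$ scale compatibly with $\partial_t u_R$, i.e.\ that $R^{-\beta}\cdot R^{-\theta(\beta)}$ (from $\partial_t$) matches $R^{-\beta(m-1)}\cdot R^{-\beta(p-1)}\cdot R^{-1}\cdot R^{-\beta}\cdot R$ type balancing coming from the divergence term; this is precisely the identity $\theta(\beta) = p-\beta(m+p-3)$. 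A secondary subtlety is that the regularity exponent $\alpha$ furnished by Theorem \ref{t1.2} must be chosen strictly between $\beta$ and $\mathrm{M}_\sharp$ (possible since $\beta<\alpha$ is assumed and $\mathrm{M}_\sharp$ depends only on $m,p$ for the homogeneous frozen-coefficient problem), and one must track that the constant $\mathrm{M}_0$ in the a priori estimate depends only on universal parameters, hence is uniform in $R$. Once these two points are pinned down, passing to the limit is routine and the conclusion follows exactly as in \cite[Theorem 1]{TU142}.
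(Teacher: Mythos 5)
Your proposal is correct and follows essentially the same route as the paper: rescale $u_R(x,t)=u(Rx,R^{\theta(\beta)}t)/R^{\beta}$ (which solves the same homogeneous equation precisely because $\theta(\beta)=p-\beta(m+p-3)$), deduce a uniform $L^\infty$ bound from the growth hypothesis, apply Theorem \ref{t1.2} with some $\alpha\in(\beta,1)$, and let the factor $R^{\beta-\alpha}\to 0$ kill the oscillation. The only details you compress — splitting the uniform bound into the cases where $\lVert(Rx,R^{\theta(\beta)}t)\rVert_{\theta(\beta)}$ is large or stays bounded, and the bookkeeping that $\theta(\alpha)<\theta(\beta)$ so the time rescaling $R^{\theta(\alpha)-\theta(\beta)}<1$ keeps points inside the cylinder where the Hölder estimate holds — are exactly the points the paper writes out explicitly.
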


\begin{proof} Fix a $\beta<\alpha<1$ (for $\alpha$ as in \eqref{Sharp_alpha}) and a large number $\mathrm{R}\gg 1$, and define $\mathcal{C}_{1}^{\theta(\beta)}\equiv \mathcal{C}_{1}$ the scaled function
$$
\mathcal{S}_{\mathrm{R}}(x,t)\colon= \frac{u(\mathrm{R} x,\mathrm{R}^{\theta(\beta)}t)}{\mathrm{R}^{\beta}}
$$

Firstly, we are going to show that
\begin{equation}\label{Eq3.6}
    \|\mathcal{S}_{\mathrm{R}}\|_{L^{\infty}(\mathcal{C}_{1})} \leq \mathrm{C} \quad  \text{for} \quad \mathrm{R}\gg 1.
\end{equation}

In effect, let $(x_{\mathrm{R}}, t_{\mathrm{R}}) \in \overline{\mathcal{C}}_{1}$ be a point which achieves the maximum, i.e.
$$
\|\mathcal{S}_{\mathrm{R}}\|_{L^{\infty}(\mathcal{C}_{1})} = \lvert \mathcal{S}_{\mathrm{R}}(x_{\mathrm{R}}, t_{\mathrm{R}})\rvert = \Bigg|\frac{u(\mathrm{R} x_{\mathrm{R}}, \mathrm{R}^{\theta(\beta)}t)}{\mathrm{R}^{\beta}} \Bigg|
$$

Now, notice that
$$
   \lvert\lvert (\mathrm{R} x_{\mathrm{R}}, \mathrm{R}^{\theta(\beta)}t_{\mathrm{R}})\rvert\rvert^{\beta}_{\theta(\beta)} \leq (2\mathrm{R})^{\beta}.
$$
Thus,

\begin{equation}\label{Eq3.7}
\frac{1}{2^{\beta}}\|\mathcal{S}_{\mathrm{R}}\|_{L^{\infty}(\mathcal{C}_{1})}=\Bigg|\frac{u(\mathrm{R} x_{\mathrm{R}}, \mathrm{R}^{\theta(\beta)}t)}{(2\mathrm{R})^{\beta}} \Bigg|  \leq \frac{|u(\mathrm{R} x_{\mathrm{R}}, \mathrm{R}^{\theta(\beta)}t)|}{\lvert\lvert (\mathrm{R} x_{\mathrm{R}}, \mathrm{R}^{\theta(\beta)}t_{\mathrm{R}})\rvert\rvert^{\beta}_{\theta(\beta)}}.
\end{equation}

In this point, by taking the limit as $\mathrm{R} \to \infty$, we must consider two cases:
\smallskip
\begin{enumerate}
  \item If $\lvert \lvert (\mathrm{R} x_{\mathrm{R}}, \mathrm{R}^{\theta(\beta)}t_{\mathrm{R}})\rvert\rvert_{\theta(\beta)}\to \infty$
\smallskip

In this case, the RHS in \eqref{Eq3.7} is bounded, by using the hypothesis \eqref{Eq3.5}. Thus,  the statement holds true.
\smallskip

  \item  If $\lvert \lvert (\mathrm{R} x_{\mathrm{R}}, \mathrm{R}^{\theta(\beta)}t_{\mathrm{R}})\rvert\rvert_{\theta(\beta)}$ remains bounded
\smallskip

Then, since $u$ is continuous, on compact sets we have
$$
\Bigg|\frac{u(\mathrm{R} x_{\mathrm{R}}, \mathrm{R}^{\theta(\beta)}t)}{(2\mathrm{R})^{\beta}} \Bigg| \leq \frac{\mathrm{C}}{(2\mathrm{R})^{\beta}} \to 0
$$
and one more time \eqref{Eq3.6} holds.
\end{enumerate}

Now, we observe that:
$$
\left\{
\begin{array}{rcl}
  \frac{\partial \mathcal{S}_{\mathrm{R}}}{\partial t}(x, t) & = & \mathrm{R}^{\theta(\beta) - \beta} \frac{\partial u}{\partial t}(\mathrm{R} x, \mathrm{R}^{\theta(\beta)} t) \\
  \Delta_{m, p} \mathcal{S}_{\mathrm{R}}(x, t) & = & \mathrm{R}^{[p(1-\beta)+\beta(1-m)+\beta]} (\Delta_{m, p} u )(\mathrm{R} x, \mathrm{R}^{\theta(\beta)} t).
\end{array}
\right.
$$

Hence,
\begin{equation}
    \frac{\partial \mathcal{S}_{\mathrm{R}}}{\partial t}-\Div{(\lvert \mathcal{S}_{\mathrm{R}}\rvert^{m-1}\lvert \nabla \mathcal{S}_{\mathrm{R}}\rvert^{p-2}\nabla \mathcal{S}_{\mathrm{R}})}=0 \quad  \text{in} \quad \mathcal{C}_{1}
\end{equation}

Now, for such a $\beta<\alpha<1$ and its corresponding $\theta(\alpha)$, we notice that, from the definition in \eqref{Scalbeta}, $\beta \mapsto \theta(\beta)$ is a decreasing function, hence $\theta_{\alpha}<\theta_{\beta}$. Therefore
, from Theorem \ref{t1.2}, we obtain
\begin{equation}\label{Eq3.10}
    \lvert \mathcal{S}_{\mathrm{R}}(x,t)-\mathcal{S}_{\mathrm{R}}(0,0)\rvert \leq \mathrm{C}\cdot \Big(\lvert t \rvert^{1/\theta(\alpha)}+\lvert x \rvert \Big)^{\alpha} \quad \forall (x,t) \in \mathcal{C}^{\theta(\alpha)}_{\frac{1}{2}}
\end{equation}

Hence, after scaling
$$
\begin{array}{rcl}
  \sup\limits_{\mathcal{C}^{\theta(\alpha)}_{\frac{\mathrm{R}}{2}}} \frac{\lvert u(x,t)-u(0,0)\rvert}{(\lvert t \rvert^{1/\theta(\alpha)}+\lvert x \rvert)^{\alpha}} & = & \sup\limits_{\mathcal{C}^{\theta(\alpha)}_{\frac{1}{2}}} \frac{\lvert u(\mathrm{R} x, \mathrm{R}^{\theta(\alpha)}t) -u(0,0)\rvert}{\mathrm{R}^{\alpha}(\lvert t \rvert^{1/\theta(\alpha)}+\lvert x \rvert)^{\alpha}} \\
   & = & \mathrm{R}^{\beta-\alpha}\sup\limits_{G^{\theta(\alpha)}_{\frac{1}{2}}} \frac{\lvert \mathcal{S}_{\mathrm{R}}(x,\mathrm{R}^{-\theta(\beta)+\theta(\alpha)}t)-\mathcal{S}_{\mathrm{R}}(0,0)\rvert}{(\lvert t \rvert^{1/\theta(\alpha)}+\lvert x \rvert)^{\alpha}} \\
   & \le & \mathrm{R}^{\beta-\alpha}\sup\limits_{G^{\theta(\alpha)}_{\frac{1}{2}}} \frac{\lvert \mathcal{S}_{\mathrm{R}}(x, \mathrm{R}^{-\theta(\beta)+\theta(\alpha)}t)-\mathcal{S}_{\mathrm{R}}(0,0)\rvert}{(\lvert \mathrm{R}^{-\theta(\beta)+\theta(\alpha)}t \rvert^{1/\theta(\alpha)}+\lvert x \rvert)^{\alpha}} \\
   & = & \text{o}(1) \quad \text{as} \quad \mathrm{R} \to \infty,
\end{array}
$$
by using \eqref{Eq3.10}, since by taking $\mathrm{R} \gg 1$ large enough, we can ensure $\mathrm{R}^{\theta(\alpha)-\theta(\beta)}<1$. Finally, we conclude $u\equiv u(0,0)$ in the whole $\mathbb{R}^n\times \mathbb{R}$.
\end{proof}

\section{Asymptotic estimates for problems ``close'' to heat equation}\label{SecAsymEst}

In this section we will prove that weak solutions of
\begin{equation}\label{Eq5.1}
  \frac{\partial u}{\partial t}-\Div(m\lvert u\rvert^{m-1}\lvert \nabla u \rvert^{p-2}\nabla u) = f(x, t) \quad \text{in} \quad \quad Q_1^{-},
\end{equation}
 become ``asymptotically smoother'' as long as our model case may be closer (in an appropriated manner to be clarified soon) to the homogeneous heat equation $\mathcal{H} u = \frac{\partial u}{\partial t} - \Delta u = 0$. For this reason, we will make use of a trick found in \cite{Stur17-2}, which consist in considering the following parameter:
\begin{equation}
\beta\colon= \frac{m-1}{p-1} \quad \text{for} \quad m\geq 1 \quad \text{and} \quad p\ge 2.
\end{equation}

In this point, we can re-write the equation \eqref{Eq5.1} as follows
\begin{equation}\label{Eq5.2}
\frac{\partial u}{\partial t}-\Psi(m,p,\beta)\Div{(\lvert \nabla u^{\beta +1}\rvert^{p-2}\nabla u^{\beta+1})}=f(x,t) \quad \text{in} \quad Q_1^{-},
\end{equation}
where $\Psi(m,p,\beta)=m\Big(\frac{1}{\beta +1}\Big)^{p-1}$.

From now on, we are going to consider the above formulation \eqref{Eq5.2} for our toy model. Moreover, it is worthwhile to note that such a constant $\Psi$ does not degenerate when we approach the parameters $m$ and $p$ to ones of heat equation, i.e.,
$$
  \Psi(m, p, \beta)\to 1 \quad  \text{as} \quad p \to 2^{+} \quad \text{and} \quad m\to 1^{+}.
$$

In the sequel, we derive a key approximation mechanism which provides a tangential path toward the regularity theory available for homogeneous heat operator.

\begin{lemma}[{\bf Caloric Approximation Lemma}]\label{CaloricLemma} Given $\delta>0$, there exist $\epsilon>0$ and $\epsilon^{\ast}>0$, depending only on $n$ and $\delta$, such that if $u$ is weak solution of \eqref{Eq5.2} with $\lvert\lvert u \rvert\rvert_{L^{\infty}(Q^{-}_{1})}\leq 1$, and
$$
\max\left\{\lvert p-2  \rvert, \lvert m-1 \rvert \right\}<\epsilon  \quad  \text{and} \quad  \lvert \lvert f \rvert\rvert_{L^{q,r}(Q^{-}_1)}\le \epsilon^{\ast},
$$
then we can find $w$ fulfilling
$$
\left\{
\begin{array}{rcrcl}
  \frac{\partial w}{\partial t}- \Delta w & = & 0 & \text{in} & Q^{-}_{\frac{1}{2}}\\
  w & = & u & \text{on} & \partial_p Q^{-}_{\frac{1}{2}}
\end{array}
\right.
$$
such that
$$
  \sup\limits_{Q^{-}_{\frac{1}{2}}}\lvert w - u\rvert \leq \delta.
$$
\end{lemma}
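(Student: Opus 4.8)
The plan is to argue by contradiction through a compactness device, closely modeled on the proof of the $(m,p)$-Approximation Lemma \ref{l2.1}. Suppose the statement fails for some $\delta_0>0$. Then there are sequences $p_j\to 2$, $m_j\to 1$ (hence $\beta_j=\frac{m_j-1}{p_j-1}\to 0$ and $\Psi(m_j,p_j,\beta_j)\to 1$), source terms $f_j$ with $\|f_j\|_{L^{q,r}(Q^{-}_1)}\to 0$, and weak solutions $u_j$ of
$$
\frac{\partial u_j}{\partial t}-\Psi(m_j,p_j,\beta_j)\,\Div\!\big(|\nabla u_j^{\beta_j+1}|^{p_j-2}\nabla u_j^{\beta_j+1}\big)=f_j\quad\text{in }Q^{-}_1,
$$
with $\|u_j\|_{L^{\infty}(Q^{-}_1)}\le 1$, for which the heat-caloric replacement $w_j$ (the unique bounded solution of $\partial_t w_j-\Delta w_j=0$ in $Q^{-}_{1/2}$ with $w_j=u_j$ on $\partial_p Q^{-}_{1/2}$) satisfies $\sup_{Q^{-}_{1/2}}|w_j-u_j|>\delta_0$ for every $j$.

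First I would extract uniform estimates. Since $p_j,m_j$ range over a compact set, the Caccioppoli type estimate (Proposition \ref{p2.1}), applied with a fixed cut-off $\xi\in C_0^\infty(Q^{-}_1)$ with $\xi\equiv 1$ on $Q^{-}_{1/2}$, together with the normalizations above, yields a constant $\bar{\mathrm C}$ independent of $j$ such that
$$
\sup_{-1<t<0}\int_{B_1}u_j^2\xi^p\,dx+\int_{-1}^{0}\!\!\int_{B_1}|\nabla u_j^{\beta_j+1}|^{p_j}\xi^{p}\,dx\,dt\le\bar{\mathrm C}.
$$
Thus $v_j:=u_j^{\beta_j+1}$ has $\nabla v_j$ bounded in $L^{p_j}_{\loc}(Q^{-}_{1/2})$, hence in $L^2_{\loc}(Q^{-}_{1/2})$ for $j$ large; up to a subsequence, $\nabla v_j\rightharpoonup \Xi$ weakly in $L^2_{\loc}(Q^{-}_{1/2})$. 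Theorem \ref{ThmHolderEst} provides equicontinuity of $(u_j)_j$ on compact subsets, with Hölder constants stable under $p_j\to 2$, $m_j\to 1$, so Arzel\`a--Ascoli gives $u_j\to u_\infty$ locally uniformly on $Q^{-}_{1/2}$ (hence also uniformly up to $\partial_p Q^{-}_{1/2}$ along the chosen subsequence); since $\beta_j\to 0$ one has $v_j=u_j^{\beta_j+1}\to u_\infty$ pointwise, and uniqueness of weak limits identifies $\Xi=\nabla u_\infty$.

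Next I would pass to the limit in the weak formulation for $u_j$: since $\Psi(m_j,p_j,\beta_j)\to 1$, $\|f_j\|_{L^{q,r}}\to 0$, and the degenerate flux $|\nabla v_j|^{p_j-2}\nabla v_j$ converges (in a suitable $L^s_{\loc}$ topology) to $\nabla u_\infty$, one obtains that $u_\infty$ is a weak, hence classical, solution of $\partial_t u_\infty-\Delta u_\infty=0$ in $Q^{-}_{1/2}$. The replacements $w_j$ are bounded by $\max_{\partial_p Q^{-}_{1/2}}|u_j|\le 1$, and since $u_j\to u_\infty$ uniformly on $\partial_p Q^{-}_{1/2}$, the maximum principle for the heat operator (applied to $w_j-w_\infty$) gives $w_j\to w_\infty$ uniformly on $Q^{-}_{1/2}$, where $w_\infty$ is caloric with $w_\infty=u_\infty$ on $\partial_p Q^{-}_{1/2}$; uniqueness for the Cauchy--Dirichlet problem for $\partial_t-\Delta$ forces $w_\infty\equiv u_\infty$. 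Therefore
$$
\|w_j-u_j\|_{L^\infty(Q^{-}_{1/2})}\le\|w_j-w_\infty\|_{L^\infty(Q^{-}_{1/2})}+\|u_\infty-u_j\|_{L^\infty(Q^{-}_{1/2})}\longrightarrow 0,
$$
contradicting $\sup_{Q^{-}_{1/2}}|w_j-u_j|>\delta_0$. The $\epsilon$ and $\epsilon^\ast$ obtained this way depend only on the fixed data $n$ and $\delta$ (every other parameter having been driven to the linear case).

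The main obstacle is the stable passage to the limit in the degenerate nonlinear flux $|\nabla v_j|^{p_j-2}\nabla v_j$ as $p_j\to 2$: the $L^2_{\loc}$ bound alone does not give $|\nabla v_j|^{p_j-2}\to 1$ in a usable topology. I expect to handle this either by upgrading to strong $L^2_{\loc}$ convergence of $\nabla v_j$ via the equation and a Minty/monotonicity argument as in the evolutionary $p$-Laplacian theory (cf. \cite{TU14}, \cite{Stur17-2}), or by splitting $|\nabla v_j|^{p_j-2}\nabla v_j=\nabla v_j+(|\nabla v_j|^{p_j-2}-1)\nabla v_j$ and estimating the second term through a small-exponent bound. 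The behaviour on the set where $u_\infty$ vanishes --- where the original diffusion collapses --- needs the same care as in Lemma \ref{l2.1}, but it is absorbed by working throughout with $v_j$ rather than $u_j$.
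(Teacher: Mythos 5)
Your proposal is correct and follows essentially the same route as the paper: a contradiction--compactness argument using the Caccioppoli estimate, equicontinuity from Theorem \ref{ThmHolderEst} with Arzel\`a--Ascoli, passage to the limit in the equation (the paper handles the flux convergence you flag simply by invoking the stability result of Kinnunen--Parviainen \cite{KinPar10}), and then the maximum principle plus uniqueness for the heat equation to identify $w_\infty=u_\infty$ and reach the contradiction. Your use of the comparison principle on $w_j-w_\infty$ in place of the paper's appeal to heat-equation regularity for the convergence of $w_j$ is a harmless (indeed slightly cleaner) variant, and your observation that $\beta_j\to 0$ corrects a typo in the paper's proof.
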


\begin{proof} Suppose for the sake of contradiction that the thesis of the Lemma is not true. This means that there would exist a $\delta_0>0$ and sequences
\[\begin{cases}
(p_{j})_j,(m_{j})_j, (u_{j})_j, (w_{j})_j\quad \text{and} \quad (f_{j})_j \\
\\
(\beta_{j})\colon= \frac{m_{j}-1}{p_{j}-1} \\
\\
\Psi(m_{j},p_{j},\beta_{j})\defeq m_{j}\Big(\frac{1}{\beta_{j}+1}\Big)^{p_{j}-1} \\
\\
v_{j}\colon= u_{j}^{\beta_{j}+1},
\end{cases}
\]
such that
\begin{center}
\[(*)\begin{cases}
\frac{\partial u_{j}}{\partial t}-\Psi(m_{j},p_{j},\beta_{j})\Div{(\lvert \nabla v_{j} \rvert^{p_{j}-2}\nabla v_{j})}=f_j  \quad \text{in} \quad Q^{-}_{1} \\
\lvert\lvert u_{j} \rvert\rvert_{\infty,Q_{1}}\leq 1 \\
\lvert \lvert f_{j} \rvert \rvert_{L^{q,r}(Q^{-}_{1})} <\frac{1}{j} \\
\lvert p_{j}-2 \rvert,\lvert m_{j}-1 \rvert <\frac{1}{j}
\end{cases}\]
\end{center}
and
\begin{equation}\label{HeatEquation}
  \left\{
\begin{array}{rclcl}
  \frac{\partial w_j}{\partial t}- \Delta w_j & = & 0 & \text{in} & Q^{-}_{\frac{1}{2}}\\
  w_j & = & u_j & \text{on} & \partial_p Q^{-}_{\frac{1}{2}}.
\end{array}
\right.
\end{equation}
However,
\begin{equation}\label{Eqw_j-u_j}
\sup\limits_{Q^{-}_{\frac{1}{2}}} \lvert u_{j}-w_j\rvert >\delta_{0} \quad \text{for every} \,\, j \in \mathbb{N}.
\end{equation}

Firstly, it is clear that
\begin{equation}\label{Lim_of_Const}
p_{j} \to 2, m_{j} \to 1 \quad \text{and} \quad \beta_{j} \to 1 \quad  \text{as}  \quad j \to \infty.
\end{equation}
 Now, as in the proof of Approximation Lemma \ref{l2.1}, up to a subsequence,
 \begin{equation}\label{Lim_of_u_j}
 u_{j} \to u_{\infty} \quad \text{in}  \quad Q^{-}_{\frac{1}{2}}.
 \end{equation}
 Moreover, in view of \eqref{Lim_of_Const} and \eqref{Lim_of_u_j}, and one more time arguing as in the proof of Approximation Lemma \ref{l2.1} we can appeal to a stability result, as the one in \cite{KinPar10}, in order to pass the limit in the equation satisfied by $u_{j}$ to conclude that $u_{\infty}$ is a weak solution of
$$
\frac{\partial \mathfrak{h}}{\partial t}- \Delta \mathfrak{h}  =  0 \quad \text{in} \quad Q^{-}_{\frac{1}{2}}.
$$

Now, we focus our attention in \eqref{HeatEquation}. Form Maximum Principle we have
$$
\|w_j\|_{L^{\infty}\left(Q^{-}_{\frac{1}{2}}\right)}\leq \|u_j\|_{L^{\infty}\left(\partial_p Q^{-}_{\frac{1}{2}}\right)} \leq \|u_j\|_{L^{\infty}\left(Q^{-}_{1}\right)} \leq 1.
$$
From regularity estimates to heat operator (see \cite{K08}) we can as above to assure that $w_j \to w_{\infty}$ in $Q^{-}_{\frac{1}{2}}$. Furthermore, $w_{\infty}$ is a weak solution to
$$
  \left\{
\begin{array}{rclcl}
 \frac{\partial \mathfrak{h}}{\partial t}- \Delta \mathfrak{h} & = & 0 & \text{in} & Q^{-}_{\frac{1}{2}}\\
  \mathfrak{h} & = & u_{\infty} & \text{on} & \partial_p Q^{-}_{\frac{1}{2}}.
\end{array}
\right.
$$
By the uniqueness of the Dirichlet problem, we conclude that $u_{\infty} = w_{\infty}$.

Finally, for a large enough $j$, we obtain
$$
\lvert u_{j} - w_{j} \rvert \leq \lvert u_{j}-u_{\infty}\rvert + \lvert w_{j} - u_{\infty}\rvert \leq \frac{\delta_{0}}{2}+\frac{\delta_{0}}{2}=\delta_{0} \quad \text{in} \quad Q^{-}_{\frac{1}{2}},
$$
which clearly yields a contradiction with \eqref{Eqw_j-u_j}. This completes the proof.
\end{proof}

Finally, we are in a position to establish asymptotic regularity estimates provided the parameters in the equation approach the linear case.

\begin{theorem}\label{Mthm2}\label{ThmAsymReg} Let $u$ be a bounded weak solution of \eqref{Eq5.1}, where $f \in L^{q, r}(Q^{-}_1)$  and
$$
\frac{1}{r}+\frac{n}{pq}<1 \quad \text{and} \quad  \frac{3}{r}+m\left(1-\frac{1}{r}\right)+\frac{n}{q}\le 2
$$
are in force. Given, $\alpha \in(0,1)$, there exists an $\varepsilon_0>0$ such that if $\max\left\{m-1, p-2\right\} < \varepsilon_0$, then any solution of \eqref{Eq5.1} belongs to $C^{\alpha, \frac{\alpha}{\theta}}$ (in parabolic sense). Moreover, there exists a constant $\mathrm{M}_0(\verb"universal")>0$ such that
$$
  \displaystyle  [u]_{C^{\alpha, \frac{\alpha}{\theta}}\left(Q^{-}_{\frac{1}{2}}\right)} \leq \mathrm{M}_0.\left[\|u\|_{L^{\infty}(Q^{-}_1)} + \|f\|_{L^{q, r}(Q^{-}_1)}\right].
$$
Quantitatively, such an estimate states that $u \in C^{1^{-}, {\frac{1}{2}}^{-}}\left(Q^{-}_{\frac{1}{2}}\right)$.
\end{theorem}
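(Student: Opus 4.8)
The plan is to reproduce the entire machinery behind the proof of Theorem \ref{t1.2}, with the single structural change that the geometric iteration is now anchored at the \emph{heat} operator rather than at an $(m,p)$-caloric profile. The point is that caloric functions enjoy \emph{universal} interior estimates of class $C^\infty$ — in particular $C^{1,\frac12}$ — with constants that do not see $m$ or $p$; and, thanks to the reformulation \eqref{Eq5.2}, the coefficient $\Psi(m,p,\beta)$ stays bounded away from $0$ and $\infty$ as $(m,p)\to(1^+,2^+)$. Consequently, in the iteration we may play the role of the homogeneous H\"older exponent $\alpha_{\mathrm{Hom}}$ with any value in $(0,1)$ — say $\tfrac{\alpha+1}{2}$ — and the role of $p_m$ with $2$, uniformly in the admissible parameter range. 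Since $\theta=p-\alpha(m+p-3)\to 2$ and $\mathrm M_\sharp\to 1$ as $(m,p)\to(1^+,2^+)$, once $\alpha\in(0,1)$ is fixed all the algebraic constraints appearing below — namely $\theta\cdot\tfrac{\alpha+1}{2}>p\alpha$, the source–scaling inequality $\alpha\le \frac{r(pq-n)-pq}{q[(m+p-2)r-(m+p-3)]}$, and $\hat\alpha\ge\alpha$ for the $p$-Laplacian exponent of Theorem \ref{ThmSharp-p-Laplace} — become strict; we then fix $\varepsilon_0>0$ small enough that they all hold whenever $\max\{m-1,p-2\}<\varepsilon_0$. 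After a preliminary normalization in the spirit of Remark \ref{r2.1}, we may and do assume $\|u\|_{L^\infty(Q^{-}_1)}\le 1$ and $\|f\|_{L^{q,r}(Q^{-}_1)}$ as small as needed.

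First I would prove the one-step oscillation decay, the caloric analogue of Lemma \ref{1stStepInduc}: given $\lambda\in(0,\tfrac14]$ and $\delta>0$ to be fixed, invoke the Caloric Approximation Lemma \ref{CaloricLemma} to produce a caloric $w$ on $Q^{-}_{1/2}$ with $\|u-w\|_{L^\infty(Q^{-}_{1/2})}\le\delta$; since $w$ obeys universal $C^{1,\frac12}$ interior bounds, $\sup_{Q^{-}_{\lambda}}|w-w(0,0)|\le C(\lambda+\lambda^{\theta/2})\le C\lambda^{\min\{1,\theta/2\}}$, and $\min\{1,\theta/2\}>\alpha$ by the choice of $\varepsilon_0$. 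Then, exactly as in Lemma \ref{1stStepInduc},
$$
\sup_{Q^{-}_{\lambda}}|u| \;\le\; 2\delta + C\lambda^{\min\{1,\theta/2\}} + \tfrac{\lambda^\alpha}{4},
$$
and choosing $\lambda$ small (so that $C\lambda^{\min\{1,\theta/2\}-\alpha}\le\tfrac14$) and then $\delta\in(0,\tfrac{\lambda^\alpha}{4}]$ yields $\|u\|_{L^\infty(Q^{-}_{\lambda})}\le\lambda^\alpha$ whenever $|u(0,0)|\le\tfrac{\lambda^\alpha}{4}$. Iterating this in $\lambda$-adic intrinsic cylinders, as in Lemma \ref{induction}, gives $\|u\|_{L^\infty(Q^{-}_{\lambda^k})}\le\lambda^{\alpha k}$ provided $|u(0,0)|\le\tfrac{\lambda^{\alpha k}}{4}$; here one checks that $v_k(x,t)=\lambda^{-k\alpha}u(\lambda^k x,\lambda^{k\theta}t)$ again solves an equation of the form \eqref{Eq5.2} with the \emph{same} $m,p$ (hence still inside the $\varepsilon_0$-window) and a rescaled source whose $L^{q,r}$ norm does not increase, the latter being exactly the source–scaling inequality recorded above. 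Removing the smallness restriction on $|u(0,0)|$ for small radii is done verbatim as in Proposition \ref{rho}.

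To conclude, I would run the three-case dichotomy of the proof of Theorem \ref{t1.2} around an arbitrary base point, say $(0,0)$, with $\mu\defeq(4|u(0,0)|)^{1/\alpha}$. For $\rho\in[\mu,\lambda)$ the iteration applies directly; for $\rho\in(0,\mu)$ one rescales to $w(x,t)=\mu^{-\alpha}u(\mu x,\mu^\theta t)$, with $|w(0,0)|=\tfrac14$, uses the interior H\"older continuity of Theorem \ref{ThmHolderEst} to find $\rho_0>0$ with $|w|\ge\tfrac18$ on $Q^{-}_{\rho_0}$, so that on that cylinder the equation becomes a $p$-Laplacian-type equation with continuous coefficients pinched between positive constants; Theorem \ref{ThmSharp-p-Laplace} then gives $C^{\hat\alpha,\hat\alpha/\hat\theta}$ with $\hat\alpha\ge\alpha$, and scaling back furnishes the estimate at scales $\lesssim\mu\rho_0$; the intermediate range $\rho\in[\mu\rho_0/2,\mu)$ is absorbed into the first case. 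Collecting the three bounds, undoing the normalization, and a standard covering argument produce the asserted estimate on $Q^{-}_{1/2}$. Finally, letting $\alpha\uparrow1$ (shrinking $\varepsilon_0$ accordingly) and noting $\theta\to2$, so $\tfrac\alpha\theta\to\tfrac12$, gives the qualitative statement $u\in C^{1^-,\frac12^-}$.

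I expect the main obstacle to be bookkeeping of the order of quantifiers and the uniformity of constants: one must fix $\alpha$ first, then select $\varepsilon_0$ so that every algebraic threshold is cleared with room to spare, and only then call the Caloric Approximation Lemma (whose $\epsilon,\epsilon^\ast$ depend on the target $\delta$, which depends on $\lambda$, which depends on $\alpha$). The genuinely delicate point is checking that the constants coming from caloric interior estimates and from the sharp $p$-Laplacian estimate of Theorem \ref{ThmSharp-p-Laplace} remain bounded as $(m,p)\to(1^+,2^+)$; this is exactly where the non-degeneracy $\Psi(m,p,\beta)\to 1$ of the reformulation \eqref{Eq5.2} is used.
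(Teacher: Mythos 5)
Your proposal is correct and follows essentially the same route as the paper: the authors' own proof is precisely the one-line sketch of invoking the Caloric Approximation Lemma \ref{CaloricLemma} together with the universal $C^{1,\frac{1}{2}}$ interior estimates for caloric functions and then repeating the iteration scheme of Theorem \ref{t1.2}, which is exactly what you carry out (your elaboration of the quantifier order, the verification that the source--scaling and $\hat\alpha\ge\alpha$ thresholds are cleared under the stated compatibility conditions, and the uniformity of constants as $(m,p)\to(1^+,2^+)$ via $\Psi(m,p,\beta)\to 1$ supplies details the paper omits).
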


\begin{proof} With the Caloric approximation device, i.e. Lemma \ref{CaloricLemma}, in hands, and making use of available $C_{\text{loc}}^{1, {\frac{1}{2}}}$ regularity estimates for caloric profiles $\frac{\partial \mathfrak{h}}{\partial t}- \Delta \mathfrak{h}  =  0$ (see \cite[Ch.2, \S 3]{Evans10}), we can proceed similarly as in the proof of Theorem \ref{t1.2}. For this reason, we will omit the details here.
\end{proof}

\section{Final comments  and further connections to related problems}

Finally, let us present a family of operators where our results take place.

\begin{example} A typical class of operators fulfilling the assumptions (P1)-(P3), and which we
can apply our result it is given by
$$
(\Omega_T, \mathbb{R}, \mathbb{R}^n)\ni (x, t, s, \xi)\mapsto \mathcal{A}(x, t, s, \xi) = \mathfrak{a}(x, t)|s|^{m-1}|\xi|^{p-2}\xi, \quad \text{for} \quad m \ge 1, p\ge 2,
$$
where $\mathfrak{a}$ is a function  bounded away from zero and infinity and satisfying
$$
|\mathfrak{a}(z)-\mathfrak{a}(z_0)| \leq \omega(|z-z_0|)
$$
for a universal modulus of continuity $\omega: [0, \infty) \to [0, \infty)$. At least, the case $\mathfrak{a} = \text{const}.$ one reduces to the well-known $(m, p)-$Laplacian operator.
\end{example}

In conclusion, as a consequence of our findings, we are able to address regularity issues for non-negative bounded weak solutions of doubly nonlinear equations (a sort of sub-linear Trudinger's equation $\mathrm{k}\leq \min\{p-1, 1\}$) as follows
\begin{equation}\label{EqDNEq}
  \frac{\partial (u^{\mathrm{k}})}{\partial t}-\Div(|\nabla u|^{p-2}\nabla u) = f(x, t) \quad \text{in} \quad \Omega_T, \,\,\,\text{for} \quad p > 2 \quad \text{and} \quad  \mathrm{k} \in (0, 1),
\end{equation}

Such an equation exhibits an interesting feature: it is singular in time, since $u^{\mathrm{k}-1}$ blows up at those points where $\{u=0\}$, and it is degenerate in space, since the modulus of ellipticity, i.e. $|\nabla u|^{p-2}$, collapses at those points where $\{|\nabla u|=0\}$.

In this scenario, we provide one more contribution to the study of such a class of PDEs, which model turbulent filtration of non-Newtonian fluids through a porous media, see \cite{DT94} for an enlightening manuscript on this topic, and \cite{HL13}, and \cite{DiUrb20} and \cite{KSU12} for related regularity results for the homogeneous problem and Trudinger's equation.

In effect, equations like \eqref{EqDNEq} are equivalent to the ones like \eqref{1.2} provided weak solutions are strictly away from zero. As a matter of fact, by performing the change $v = u^{\mathrm{k}}$, equation \eqref{EqDNEq} reads
$$
   \frac{\partial v}{\partial t}-\Div\left(\frac{1}{\mathrm{k}^{p-1}}v^{\frac{(1-\mathrm{k})(p-1)}{\mathrm{k}}}\lvert \nabla v \rvert^{p-2}\nabla v\right) = f(x, t) \quad \text{in} \quad \Omega_T.
$$
In this setting, we have $m = \frac{(1-\mathrm{k})(p-1)}{\mathrm{k}}+1\ge 1$ for $p > 2$ and $0< \mathrm{k}< 1$.

Therefore, we can access the sharp/improved regularity estimates available in Theorem \eqref{t1.2}, thereby establishing corresponding ones for model cases as \eqref{EqDNEq}. Precisely, weak solutions of \eqref{EqDNEq} $u \in C_{\text{loc}}^{\alpha, \frac{\alpha}{\theta}}(\Omega_T)$, where
$$
  \alpha \defeq \min\left\{\max\left\{\frac{\mathrm{k}p\alpha^{-}_{\mathrm{Hom}}}{\mathrm{k}p_m + \alpha_{\mathrm{Hom}}(p-1-\mathrm{k})}, \frac{2\mathrm{k}\alpha^{-}_{\mathrm{Hom}}}{p_m} \right\},\, \,\frac{\mathrm{k}[(pq-n)r-pq]}{q[(r-1)(p-1)+\mathrm{k}]}\right\}
$$
and
$$
\theta \defeq p -\alpha\left(\frac{p-1}{\mathrm{k}}\right).\Big(1-\frac{\mathrm{k}}{p-1}\Big),
$$
where $\alpha_{\mathrm{Hom}}>0$ was recently addressed in \cite[Theorem 2.3]{HL13}.

Finally, as in Theorem \ref{ThmAsymReg} the previous estimates are stable when the parameter $\mathrm{k}$ goes to $1^{-}$, thereby recovering the sharp estimates for the evolutionary $p-$Laplacian problem addressed in \cite[Theorem 3.4]{TU14}.

\bigskip

\noindent{\bf Acknowledgements.} This manuscript is part of the second author's Ph.D thesis. He would like to thank the Department of Mathematics at Universidade Federal do Cear\'{a} for fostering a pleasant and productive scientific atmosphere, which has benefited a lot the final outcome of this project. E.C. Bezerra J\'{u}nior thanks to Capes-Brazil (Doctoral Scholarship). J.V. da Silva and G.C. Ricarte have been partially supported by Conselho Nacional de Desenvolvimento Cient\'{i}fico e Tecnol\'{o}gico (CNPq-Brazil) under Grants No. 310303/2019-2 and No. 303078/2018-9.

\end{document}